\newtheorem{definition}{Definition}[section]
\newtheorem{theorem}{Theorem}[section]
\newtheorem{corollary}{Corollary}[theorem]
\newtheorem{remark}{Remark}
\newtheorem{example}{Example}
\newtheorem{proof}{Proof}[section]
\date{}
\DeclarePairedDelimiter\norm{\lVert}{\rVert}
\title{Stochastic Processes and Mean Square Calculus on Fractal Curves}
\author{Alireza Khalili Golmankhaneh$^{1}$\footnote{Corresponding author: Alireza Khalili Golmankhaneh.
Email addresses: $alireza.khalili@iau.ac.ir$ or $alirezakhalili2002@yahoo.co.in$ (Alireza Khalili Golmankhaneh ),
$kwelch@ciis.edu$ (Kerri Welch ), $mcserpa@fc.ul.pt$ ( Cristina Serpa )
}, Kerri Welch$^{2}$,  Cristina Serpa$^{3,4,5}$ Ivanka Stamova $^{6}$ \\
	$^{1}$ Department of Physics, Urmia Branch, Islamic Azad University, Urmia 63896,  Iran \\
$^{2}$Faculty at California Institute of Integral Studies, San Francisco, CA, USA \\
$^{3}$ ISEL-Instituto Superior de Engenharia de Lisboa, Portugal\\
$^{4}$ CMAFcIO-Centro de Matem\'{a}tica, Aplica\c{c}\~{o}es Fundamentais e Investiga\c{c}\~{a}o Operacional, Portugal\\
$^{5}$ Instituto Piaget–Cooperativa para o Desenvolvimento Humano, Integral e Ecológico, C.R.L., Portugal\\
$^{6}$ Department of Mathematics, University of Texas at San Antonio, San Antonio, TX 78249, U.S.A.}
\date{\today}
\begin{document}

\maketitle

\begin{abstract}
In this paper, random and stochastic processes are defined on fractal curves. Fractal calculus is used to define cumulative distribution function, probability density function, moments, variance, and correlation function of stochastic process on fractal curves. A new framework which is a generalization of  mean square calculus is formulated.  The sequence of random variables on the fractal curve, fractal mean square  continuity, mean square $F^{\alpha}$-derivative, and fractal mean square integral. The mean square solution of  a fractal stochastic equation  is derived and plotted in order to show the details.

\end{abstract}

\textbf{Keywords:} $F^{\alpha}$-calculus;   Fractal stochastic equation;  Mean square calculus;  Fractal curve\\
\hspace{2cm}~~~~~~ MSC[2010]: 28A80,  60G12, 60G18, 60G20, 60H05

\section{Introduction}
Fractals are shapes that are seen in nature, such as clouds, mountains, coastlines, blood vessels, heart rates, Romanesco broccoli, trees,  frost crystals, and so on \cite{b-1}. Fractal geometry is suggested to characterize the  fractals and  their properties. Fractals often have fractional dimensions and are self-similar and their fractal dimension exceeds their topological dimension \cite{falconer1999techniques}. Analysis on fractals was formulated by many researchers by different methods such as measure theory, harmonic analysis, fractional space, fractional calculus, stochastic process    \cite{freiberg2002harmonic,ma-7,samayoa2022map,ma-12,ma-13,stillinger1977axiomatic,ma-6,ma-8,ma-3}
The Riemann-like method which is base of ordinary calculus, has been modified to include functions with fractal support such as Cantor sets and curves. This framework is simple, algorithmic with geometrical and physical meaning which is called fractal calculus or $F^{\alpha}$-calculus \cite{parvate2009calculus,AD-2,ASAq,satin2013fokker}. Sub-diffusion and super diffusion were modeled by fractal local derivatives  without violating locality and central limit theorem \cite{golmankhaneh2018sub}. Non-local fractal derivatives were defined to model incompressible viscous fluid in fractal media and processes with memory \cite{golmankhaneh2016non,banchuin2022noise}. Fractal calculus has been used to present new model in physics in the fractal space and time \cite{golmankhaneh2021equilibrium}. These models present power law and self-similar solutions and results \cite{BookAlireza}. Fractal calculus was used to find the derivative and integral of the Weierstrass function \cite{gowrisankar2021fractal}. To do research in this direction we have generalized the mean square calculus on fractal curves. \\
The outline of the paper is as follows:\\
Section \ref{1g} we give fractal calculus on curves. We define the stochastic and random variables on fractal curves in Section \ref{2g}. In Section \ref{3g} the fractal mean square calculus is suggested on fractal curves and the corresponding stochastic equation is solved. Finally, Section \ref{4g} is devoted to conclusion.

\section{Preliminaries \label{1g}}
In this section, we  summarize the fractal calculus on fractal curves. \cite{parvate2009calculus,AD-2,ASAq,BookAlireza}.
\subsection{Fractal calculus on fractal curve}
Let the image of the continuous function $f: R\rightarrow R^{n}$ be a fractal.

\begin{definition}
A fractal curve $F\subset R^{n}$ is called  continuously parameterizable if there exists a function such as
\begin{equation}\label{Non-1-1}
  w:[a_{1},b_{1}]\rightarrow F
\end{equation}
where $[a_{1},b_{1}]\subset R$, and $w$ is continuous, one-to-one and onto $F$.
\end{definition}

\begin{definition}\label{FFFas}
A subdivision $P_{[a,b]}$ of interval $[a,b]\subset [a_{1},b_{1}]$ is a finite set of points such as
\begin{equation}\label{Nppo-2}
  P_{[a,b]}=\{a=t_{0},t_{1},...,t_{n}=b\}
\end{equation}
where $t_{i}<t_{i+1}$, and $[t_{i},t_{i+1}]$ is component of the $P_{[a,b]}$. A subdivision $Q$ is called a refinement of $P$ if we have $P\subset Q$.
\end{definition}

\begin{definition}
Let $F$ be a fractal curve, and a subdivision $P_{[a,b]}$ of $[a,b]$,  then  $\sigma^{\alpha}[F,P]$ is defined by
\begin{equation}\label{EWQ-er}
  \sigma^{\alpha}[F,P]=\sum_{i=0}^{n-1}\frac{|w(t_{i+1})-w(t_{i})|^{\alpha}}
  {\Gamma(\alpha+1)}
\end{equation}
where $|*|$ denotes the Euclidean norm on $R^{n}$.
\end{definition}

\begin{definition}
The coarse grained mass $\gamma_{\delta}^{\alpha}(F,a,b)$, for given $\delta$ is defined by
\begin{equation}\label{Non-ij8}
  \gamma_{\delta}^{\alpha}(F,a,b)=\inf_{P_{[a,b]}:|P|\leq \delta} \sigma^{\alpha}[F,P],
\end{equation}
where $|P|=\max_{0\leq i\leq n-1}(t_{i+1}-t_{i})$ for the subdivision $P$.
\end{definition}
\begin{definition}
The mass function $\gamma^{\alpha}(F,a,b)$ is defined by
\begin{equation}\label{REEe787}
  \gamma^{\alpha}(F,a,b)=\lim_{\delta\rightarrow 0} \gamma_{\delta}^{\alpha}(F,a,b).
\end{equation}
The $\gamma^{\alpha}(F,a,b)$ is a monotonic function of $\delta$, and the limit exists.
\end{definition}
\begin{definition}
The $\gamma$-dimension of $F$ is defined by \cite{AD-2,ASAq,BookAlireza}
\begin{equation}
  dim_{\gamma}(F)=\inf\{\alpha:\gamma^{\alpha}(F,a,b)=0\}=
  \sup\{\alpha:\gamma^{\alpha}(F,a,b)=\infty\}.
\end{equation}
\end{definition}
\begin{definition}
The staircase function $S_{F}^{\alpha}:[a_{1},b_{1}]\rightarrow R$ of order $\alpha$ for a $F$ is defined by
\begin{equation}\label{Gttt99}
  S_{F}^{\alpha}(t)=\left\{
                      \begin{array}{ll}
                        \gamma^{\alpha}(F,a,b), & t\geq p_{0} \\
                        -\gamma^{\alpha}(F,a,b), & t< p_{0},
                      \end{array}
                    \right.
\end{equation}
where $p_{0}\in [a_{1},b_{1}]$, and $t\in [a_{1},b_{1}]$. Let consider $F$ for which $  S_{F}^{\alpha}(t)$ be strictly increasing, so it is invertible, and we can write
\begin{equation}\label{Yr-No}
  J(\theta)=S_{F}^{\alpha}(w^{-1}(\theta)),~~~~\theta\in F.
\end{equation}
which is one-to-one.
\end{definition}
\begin{definition}
Let $f:F\rightarrow R$. The limit of $f$ through points of $F$ is $l$, if for given $\epsilon> 0$ there exists $\delta>0$ such that
\begin{equation}\label{nonuu}
  \theta'\in F,~~~and,~~~|\theta'-\theta|<\delta\Rightarrow |f(\theta')-l|<\varepsilon
\end{equation}
and is denoted by
\begin{equation}\label{ETTT}
  l=\underset{\theta'\rightarrow \theta^{-}}{F_{-}lim}f(\theta').
\end{equation}
\end{definition}

\begin{definition}
If $f:F\rightarrow R$, then the $F^{\alpha}$-derivative of function $f$ at $\theta\in F$ is defined by
\begin{equation}\label{Trea223}
  D_{F}^{\alpha} f(\theta)=F-\lim_{\theta'\rightarrow \theta}\frac{f(\theta')-f(\theta)}{J(\theta')-J(\theta)}
\end{equation}
if the limit exists.
\end{definition}

\begin{definition}\label{EQASAaaaa}
 Let us consider finite partition $P_{[a,b]}$ given in Definition \ref{FFFas}. Then, the $F^{\alpha}$-integral of $f:F\rightarrow R$ bounded function on $F$, is defined by
\begin{equation}
\int_{C(a,b)}f(\theta)d_{F}^{\alpha}\theta= \underset{\Delta_{n}\rightarrow 0}{F_{-}lim} \sum_{i=0}^{n}f(\theta_{i'})[S_{F}^{\alpha}(w^{-1}(\theta_{i}))-S_{F}^{\alpha}
(w^{-1}(\theta_{i-1}))]
\end{equation}
where $\theta=w(t)$,~$\theta_{i'}\in[\theta_{i-1},\theta_{i})$, ~$\Delta_{n}=\max_{i}(\theta_{i}-\theta_{i-1})$ and segment $C(t_{1},t_{2})$ is defined by
\begin{equation}\label{qqqqa}
  C(t_{1},t_{2})=\{w(t'):t'\in [t_{1},t_{2}]\}.
\end{equation}
\end{definition}
\section{Random variable on fractal curve \label{2g}}
In this section, we define random variable on fractal curve (RVFC) \cite{khalili2019random,golmankhaneh2021fractalBro,golmankhaneh2020stochastic}. Let consider probability space $(\mathcal{S},\mathcal{F},\mathbb{P})$ where $S$ is the set of outcomes, $ \mathcal{F}$ is the $\sigma$-algebra of events, and $\mathbb{P}$ is the probability measure on sample space $S$ \cite{soong1973random}.
\begin{definition}
A random variable on the fractal curves $F$ (RVFC) is defined by
\begin{equation}
  X(\zeta):\mathcal{S}\rightarrow F
\end{equation}
where $\mathcal{S}$ is sample space.
\end{definition}

\begin{definition}
The cumulative distribution function of RVRC is defined by
\begin{equation}
  F_{X}(\theta)=P(X(\zeta)\leq \theta)~~~\theta\in F.
\end{equation}

\end{definition}

\begin{definition}
The probability density function of RVRC is defined by
\begin{equation}
  f_{X}(\theta)= D_{F}^{\alpha}F_{X}(\theta).
\end{equation}
Then we can write

\begin{equation}
 F_{X}(\theta) =\int_{C(-\infty,t)}f_{X}(\theta)d_{F}^{\alpha}\theta
\end{equation}
where $\theta=w(t)$.
\end{definition}

\begin{definition}\label{DESW45}
The mean of RVRC is defined by
\begin{equation}
  E(X)=\int_{-\infty}^{+\infty}\theta f_{X}(\theta)d_{F}^{\alpha}\theta
\end{equation}
\end{definition}
\begin{definition}
The variance of RVRC  is defined by
\begin{equation}
  Var(X)=\int_{-\infty}^{\infty}(\theta-E(X))^{2}f_{X}(\theta)d_{F}^{\alpha}\theta.
\end{equation}
\end{definition}

\begin{definition}
The $m$-th moment of RVRC is defined by
\begin{equation}
  E(X^{m})=\int_{-\infty}^{+\infty}\theta^{m}f_{X}(\theta)
d_{F}^{\alpha}\theta,~~~m\in N
\end{equation}
\end{definition}

\begin{definition}
A  RVRC is called uniform if its probability density is given by
\begin{equation}
  f_{X}(\theta)=\left\{
                  \begin{array}{ll}
                    \Gamma(\alpha+1), & \theta\in F \\
                    0, & otherwise.
                  \end{array}
                \right.
\end{equation}
and, its  cumulative distribution function is
\begin{equation}
  F_{X}(\theta)=\int_{-\infty}^{\infty}
\Gamma(\alpha+1)d_{F}^{\alpha}\theta=
\left\{
  \begin{array}{ll}
    J(\theta), & \theta \in F \\
    0, & otherwise .
  \end{array}
\right.
\end{equation}

\end{definition}

\begin{definition}
A RVRC is called memoryless if its probability density function is given by
\begin{equation}
  f_{X}(\theta)=\left\{
                  \begin{array}{ll}
                    \lambda \exp(-\lambda\theta), & 0 <\theta, \\
                    0, & \theta<0.
                  \end{array}
                \right.
\end{equation}

Then its cumulative distribution function is
\begin{equation}\label{EDEES}
  F_{X}(\theta)=\left\{
                  \begin{array}{ll}
                    1-\exp(-\lambda J(\theta)), & \theta\geq 0 \\
                    0, &  \theta<0 .
                  \end{array}
                \right.
\end{equation}
where $\lambda>0$.
\begin{figure}[H]
  \centering
  \includegraphics[scale=0.5]{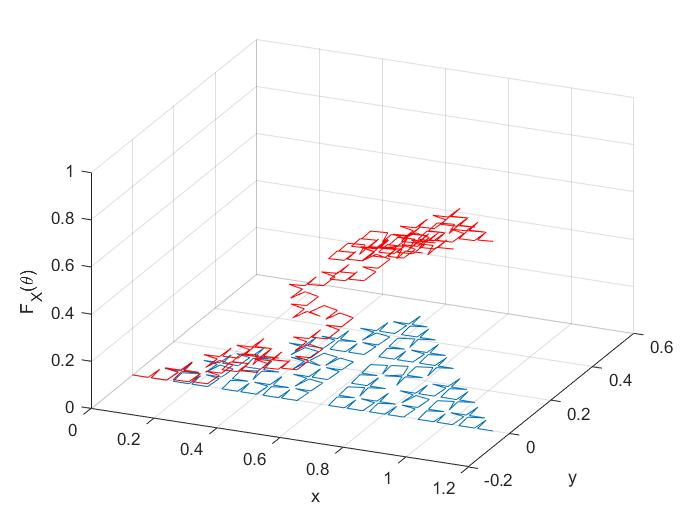}
  \caption{Graph of Eq. \eqref{EDEES} choosing $\lambda=1$}\label{ggrrg2}
\end{figure}

In Figure \ref{ggrrg2}, we have sketched Eq.\eqref{EDEES}.
\end{definition}
\subsection{Random processes on fractal curve}
\begin{definition}
A fractal random process is a family of random variables denoted by
\begin{equation}
  X(\zeta,\tau),~~~~~\zeta\in \mathcal{S},~~~\tau\in F.
\end{equation}
If $\zeta=\zeta_{i}$ is constant, then $X(\zeta_{i},\tau)=X_{i}(\tau)$ is called fractal sample function. For simplicity, let $ X(\zeta,\tau)=X(\tau)$.
\end{definition}

\begin{definition}
The correlation function of a fractal random process $X(\tau)$ is defined by
\begin{equation}\label{ManCo}
  R(\tau_{1},\tau_{2})=E[X(\tau_{1}) X(\tau_{2})],~~~\tau_{1},~\tau_{2} \in F.
\end{equation}
If $\tau_{2}=\tau+\epsilon$, and $\tau_{1}=\tau$, then Eq.\eqref{ManCo} turns to
\begin{equation}
  R(\tau)=E[X(\tau) X(\tau+\epsilon)],~~~\tau \in F.
\end{equation}
\end{definition}

\section{Mean square $F^{\alpha}$-calculus on fractal curves \label{3g}}
In this section, we give below some definitions to develop mean square (f.m.s) calculus \cite{soong1973random} which might be called fractal mean square calculus.
\begin{definition}
The inner product of two random variable on fractal curve $X_{1}$ and $X_{2}$ is defined by
\begin{equation}
  <X_{1},X_{2}>=E[X_{1}X_{2}].
\end{equation}
\end{definition}
\begin{definition}
The distance between two random variable   $X_{1}$ and $X_{2}$ is define by
\begin{equation}
  d(X_{1},X_{2})=||X_{1},X_{2}||
\end{equation}
\end{definition}
\begin{remark}
The class of all second random variable $X_{1},X_{2},...,X_{n}$ on probability space $(\mathcal{S},\mathcal{F},\mathbb{P})$ constitute a linear vector space which is denoted by $L^{\alpha}_{2}$-space.
\end{remark}

\begin{definition}
A random process on fractal curve $X(\tau)$ is called second order if \cite{soong1973random}
\begin{equation}
  ||X(\tau)||=(<X(\tau),X(\tau)>)^{1/2}=R(\tau,\tau)^{1/2}=(E[X(\tau)^{2}])^{1/2}<\infty,~\tau\in F.
\end{equation}
\end{definition}

\begin{definition}
A stochastic process $X(\tau),~\tau\in F$ is a second order stochastic process (s.p.) if, for every set $\tau_{1},\tau_{2},...,\tau_{n}$ are elements of $L_{2}^{\alpha}$-space, namely,
\begin{equation}
 ||X(\tau)||^2= E[X(\tau)^{2}]<\infty,~~~\tau\in F.
\end{equation}

\end{definition}

\begin{definition}
 A sequence of random variable on fractal curve
$X(\tau_{1}), X(\tau_{2}),...,X(\tau_{n})$  is called mean square convergent to $X(\tau)$ if we have
\begin{equation}
  \underset{n \rightarrow \infty}{F_{-}lim}~E[(X(\tau_{n})-X(\tau))^{2}]=0,
\end{equation}
or
\begin{equation}
  \underset{n \rightarrow \infty}{f.l.i.m}~ X(\tau_{n})=X(\tau),
\end{equation}
where the symbol $f.l.i.m$ is  denotes the fractal limit in mean square \cite{soong1973random}.
\end{definition}
\begin{theorem}\label{T431}
  Let $X_{n}$ and $X_{n'}'$ where $n,n'\in N ~(=index ~set)$ be two sequence of second order f.r.v., if
\begin{equation}
  \underset{n \rightarrow n_{0}}{f.l.i.m}~X_{n}=X,~~~\textmd{and}~~~\underset{n' \rightarrow n_{0}'}{f.l.i.m}~X_{n'}'=X'
\end{equation}
where $n_{0}$ and $n_{0}'$ are limit points of $N$. Then
\begin{equation}\label{REDQ}
  \underset{n \rightarrow n_{0},n' \rightarrow n_{0}'}{F_{-}lim}E\{X_{n}X'_{n'}\}=E\{XX'\}.
\end{equation}
\end{theorem}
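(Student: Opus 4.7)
The plan is to prove this by the classical argument that the bilinear form $(Y_1, Y_2) \mapsto E[Y_1 Y_2]$ is jointly continuous on the $L^\alpha_2$-space introduced earlier. The algebraic starting point is the telescoping identity
\begin{equation*}
E\{X_n X'_{n'}\} - E\{X X'\} = E\{(X_n - X) X'_{n'}\} + E\{X (X'_{n'} - X')\}.
\end{equation*}
Applying the triangle inequality followed by the Cauchy--Schwarz inequality in the inner product $\langle Y_1, Y_2 \rangle = E[Y_1 Y_2]$ defined just above the theorem, one obtains
\begin{equation*}
|E\{X_n X'_{n'}\} - E\{X X'\}| \leq \|X_n - X\| \, \|X'_{n'}\| + \|X\| \, \|X'_{n'} - X'\|.
\end{equation*}

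The main obstacle is controlling the factors $\|X'_{n'}\|$ and $\|X\|$, which are not themselves small. For $\|X'_{n'}\|$, I would invoke the reverse triangle inequality $\|X'_{n'}\| \leq \|X'_{n'} - X'\| + \|X'\|$; since the first term tends to zero by the second f.l.i.m hypothesis and the second is a fixed finite quantity, this yields a uniform bound $\|X'_{n'}\| \leq C$ for all $n'$ sufficiently close to $n'_0$. This step tacitly requires that $X'$, being the mean-square limit of second-order variables, itself lies in $L^\alpha_2$; the analogous statement is needed for $X$. Both facts rest on completeness of $L^\alpha_2$, which is inherited from the usual $L^2$ space over the probability space $(\mathcal{S}, \mathcal{F}, \mathbb{P})$, since an $L^\alpha_2$ element is simply a second-order random variable whose realizations live on the curve $F \subset R^n$.

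Granting these bounds, combining the estimates gives
\begin{equation*}
|E\{X_n X'_{n'}\} - E\{X X'\}| \leq C \, \|X_n - X\| + \|X\| \, \|X'_{n'} - X'\|,
\end{equation*}
and the right-hand side tends to zero under the joint limit $n \to n_0$, $n' \to n'_0$ by the two f.l.i.m hypotheses, which establishes \eqref{REDQ}. The fractal structure enters the argument only through the index set $F$ and the notation $F_{-}lim$ used for the limit; the underlying inner-product and Cauchy--Schwarz machinery is identical to that of the classical mean-square calculus, so no new analytic input beyond the definitions of Section \ref{3g} is required.
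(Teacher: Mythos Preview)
Your argument is correct and rests on the same idea as the paper---Cauchy--Schwarz applied to an additive decomposition of $X_n X'_{n'}-XX'$---but the decomposition you use is not quite the one in the paper. The paper writes $\Delta X = X_n - X$, $\Delta X' = X'_{n'}-X'$ and expands
\[
X_n X'_{n'} - XX' \;=\; X\,\Delta X' + X'\,\Delta X + \Delta X\,\Delta X',
\]
obtaining the three-term bound $\|X\|\,\|\Delta X'\| + \|X'\|\,\|\Delta X\| + \|\Delta X\|\,\|\Delta X'\|$. The advantage of this form is that every factor paired with a $\Delta$-term is fixed ($X$ or $X'$) or is itself a $\Delta$-term, so no uniform-boundedness argument for the moving sequence $\|X'_{n'}\|$ is needed. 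Your two-term telescoping is equally valid but forces you to bound $\|X'_{n'}\|$ via the reverse triangle inequality, which is the extra paragraph in your proposal. Both routes tacitly use that the limits $X,X'$ are themselves second order; neither your proof nor the paper's says anything beyond that. In short: same method, slightly different bookkeeping, and the paper's choice is marginally cleaner because it sidesteps the boundedness step you had to insert.
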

\begin{proof}
  As $L_{2}^{\alpha}$-space is linear. Then
\begin{equation}
  \Delta X=X_{n}-X,~~~~\Delta X'=X'_{n'}- X',
\end{equation}
are second order f.r.v., and  we have
\begin{align}
 & |E\{X_{n}X'_{n'}-XX'\}|=|E\{X\Delta X'\}+ E\{ X'\Delta X\}+ E\{\Delta X\Delta X'\}|  \nonumber\\&
 \leq ||X||.||\Delta X'||+ ||X'||.||\Delta X||+||\Delta X||.||\Delta X'||,
\end{align}
by using the hypothesis, namely,
\begin{equation}
  \underset{n \rightarrow n_{0}}{F_{-}lim}~||\Delta X||=0,~~~\underset{n' \rightarrow n_{0}'}{F_{-}lim}~||\Delta X'||=0
\end{equation}
we thus get
\begin{equation}
  \underset{n \rightarrow n_{0}, n' \rightarrow n_{0}'}{F_{-}lim}|E\{X_{n}X'_{n'}-XX'\}|=0,
\end{equation}
and the proof is complete.
\end{proof}

\begin{theorem}\label{NBBWQ}
  Let $\{X_{n}(\tau)\},~\tau\in F$ be a sequence of second order. Then it converges to second order process $X(\tau),~\tau\in F$ if, and only if,
\begin{equation}\label{REwaaa}
 \underset{n,n' \rightarrow n_{0}}{F_{-}lim} R_{X_{n}X_{n'}}(\tau,s)=\underset{n,n' \rightarrow n_{0}}{F_{-}lim} E\{X_{n}(\tau),X_{n'}(\tau)\}=R_{XX}(\tau,s)=R(\tau,s),
\end{equation}
 and $R_{XX}(\tau,s)$ is finite on $F\times F$.
\end{theorem}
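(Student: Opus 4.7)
The plan is to prove the two implications of the biconditional separately, leveraging Theorem \ref{T431} (continuity of the inner product under fractal mean-square convergence) in both directions.

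For necessity, I would assume $X_n(\tau) \xrightarrow{f.l.i.m} X(\tau)$ for each $\tau\in F$ with $X(\tau)$ second order, and apply Theorem \ref{T431} directly to the pair of sequences $\{X_n(\tau)\}$ and $\{X_{n'}(s)\}$: since both converge in mean square, the expectation of the product converges, so
\begin{equation*}
R_{X_n X_{n'}}(\tau,s)=E\{X_n(\tau)X_{n'}(s)\}\longrightarrow E\{X(\tau)X(s)\}=R(\tau,s),
\end{equation*}
and finiteness of $R(\tau,s)$ on $F\times F$ follows from the Cauchy--Schwarz inequality in $L_2^\alpha$, namely $|R(\tau,s)|\leq\|X(\tau)\|\,\|X(s)\|<\infty$, since $X$ is second order.

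For sufficiency, the key identity is the expansion
\begin{equation*}
\|X_n(\tau)-X_{n'}(\tau)\|^2=R_{X_nX_n}(\tau,\tau)-2\,R_{X_nX_{n'}}(\tau,\tau)+R_{X_{n'}X_{n'}}(\tau,\tau).
\end{equation*}
Passing to the limit $n,n'\to n_0$ and using \eqref{REwaaa}, each of the three correlation terms on the right tends to the same finite quantity $R(\tau,\tau)$, so the left-hand side vanishes. Hence $\{X_n(\tau)\}$ is a fractal mean-square Cauchy sequence in $L_2^\alpha$. Invoking completeness of $L_2^\alpha$--space (inherited from the Hilbert space $L^2(\mathcal{S},\mathcal{F},\mathbb{P})$, since the fractal structure lives in the index $\tau\in F$ rather than in the probability measure), we obtain a random variable $X(\tau)$ with $X_n(\tau)\xrightarrow{f.l.i.m} X(\tau)$. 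A second application of Theorem \ref{T431} with $n'=n$ yields $\|X(\tau)\|^2=\lim_n R_{X_nX_n}(\tau,\tau)=R(\tau,\tau)<\infty$, certifying that $X(\tau)$ is a second-order process.

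The main obstacle I anticipate is not the algebra, which is essentially classical once Theorem \ref{T431} is in hand, but rather the implicit appeal to completeness of $L_2^\alpha$--space, which the paper introduces only informally in the preceding remark. I would handle this by noting explicitly that fractal mean-square convergence is, at the level of random variables for each fixed $\tau$, ordinary $L^2$ convergence, so completeness is inherited from the standard $L^2$ Hilbert space structure. A secondary technical point is that the double limit in \eqref{REwaaa} must be interpreted so as to cover both the diagonal case $n=n'$ (needed for the first and third terms in the expansion) and the genuinely mixed limit (needed for the cross term); this should be checked against the precise meaning of $F_{-}\mathrm{lim}$ as $n,n'\to n_0$.
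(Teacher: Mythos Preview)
Your proposal is correct and follows essentially the same route as the paper: for sufficiency you expand $\|X_n(\tau)-X_{n'}(\tau)\|^2$ into the three correlation terms and let each tend to $R(\tau,\tau)$, and for necessity you invoke Theorem~\ref{T431} on the pair $\{X_n(\tau)\},\{X_{n'}(s)\}$. The only differences are that you make explicit the appeal to completeness of $L_2^\alpha$ (which the paper leaves implicit when passing from the Cauchy property to the existence of $X(\tau)$) and that you add the Cauchy--Schwarz step and the second-order verification of the limit, both of which the paper omits; these are refinements rather than a different argument.
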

\begin{proof}
  To establish the if part, set $R(\tau,s)=R_{XX}(\tau,s)$, then
\begin{equation}\label{zfrd}
  ||X_{n}-X_{n'}||^{2}=E\{X_{n}^{2}(\tau)\}-2E\{X_{n}(\tau)X_{n'}(\tau)\}+E\{X_{n'}^{2}(\tau)\}
\end{equation}
It follows that
\begin{equation}
   \underset{n,n' \rightarrow n_{0}}{F_{-}lim}||X_{n}-X_{n'}||^{2}=R(\tau,\tau)-2R(\tau,\tau)+R(\tau,\tau)=0,
\end{equation}
which implies
\begin{equation}
 \underset{n \rightarrow n_{0}}{f.l.i.m}~X_{n}(\tau)= X(\tau).
\end{equation}
To  establish the only if, we make apply of Theorem \ref{T431}. Substituting $X_{n}$ by $X_{n}(\tau)$, $X_{n}'$ by $X_{n}(s)$, $X$ by $X(\tau)$, and $X'$ by $X(s)$, then Eq.\eqref{REDQ} becomes
\begin{equation}\label{MMMM}
  \underset{n \rightarrow n_{0},n' \rightarrow n_{0}' }{F_{-}lim}E[X_{n}(\tau)X_{n'}(s)]=E[X(\tau)X(s)]=R(\tau,s)
\end{equation}
 On setting $\tau=s$, we obtain
\begin{equation}
  \underset{n \rightarrow n_{0},n' \rightarrow n_{0}' }{F_{-}lim}E[X_{n}(\tau)X_{n'}(\tau)]=E[X(\tau)X(\theta)]=R(\tau,\tau)
\end{equation}
which proves only if part. Eq.\eqref{REwaaa} is established by setting $n=n'$ in Eq.\eqref{MMMM}.
\end{proof}

\begin{definition}
 A sequence of random variables on the fractal curve
$X(\tau_{1}), X(\tau_{2}),...,X(\tau_{n})$  is fundamental fractal in probability if, for every $\epsilon>0$
\begin{equation}
  \underset{n,m \rightarrow \infty}{F_{-}lim}~P\big\{|X(\tau_{m})-X(\tau_{n})|>\epsilon\big\}=0,
\end{equation}
or
\begin{equation}
  \underset{n,m \rightarrow \infty}{f.l.i.m}~ X(\tau_{m})=X(\tau_{n}).
\end{equation}
\end{definition}
\begin{definition}
 A sequence of random variables on the fractal curve
$X(\tau_{1}), X(\tau_{2}),...,X(\tau_{n})$  converges in probability   to $X(\tau)$ if we have
\begin{equation}
  \underset{n \rightarrow \infty}{F_{-}lim}~P\big\{|X(\tau_{n})-X(\tau)|>\epsilon\big\}=0,
\end{equation}
or
\begin{equation}
  \underset{n \rightarrow \infty}{f.l.i.m}~ X(\tau_{n})=X(\tau).
\end{equation}
\end{definition}

\begin{definition}
Let $X(\tau)$ be a random process on fractal curve. If
\begin{equation}
  \underset{\epsilon \rightarrow 0}{F_{-}lim}~E[(X(\tau+\epsilon)-X(\tau))^{2}]=0,
\end{equation}
or
\begin{equation}
  \underset{\epsilon \rightarrow 0}{f.l.i.m}~ X(\tau+\epsilon)=X(\tau),
\end{equation}
where $\epsilon>0$ is infinitesimal \cite{keisler2013elementary}, then $X(\tau)$ is called mean square $F^{\alpha}$-continuous at $\tau$.
\end{definition}

\begin{definition}
If a second order f.s.p $X(\tau),~\tau\in F$ is f.m.s continuous at every point $\tau \in C(\tau_{1},\tau_{2})$, then $X(\tau)$ is f.m.s. continuous on segment $C(\tau_{1},\tau_{2})$.
\end{definition}

\begin{definition}\label{defsecond}
A random process $X(\tau)$ have mean square $F^{\alpha}$-derivative if we have
\begin{equation}
  \underset{\epsilon \rightarrow 0}{F_{-}lim}~E\bigg[\bigg(\frac{X(\tau+\epsilon)-X(\tau)}{J(\epsilon)}-
D_{F}^{\alpha}X(\tau)\bigg)^{2}\bigg]=0,
\end{equation}
or
\begin{equation}
  \underset{\epsilon \rightarrow 0}{f.l.i.m}~\frac{X(\tau+\epsilon)-X(\tau)}{J(\epsilon)}=
D_{F}^{\alpha}X(\tau).
\end{equation}
\end{definition}
\begin{definition}
If a second order f.s.p $X(\tau),~\tau\in F$ is f.m.s differentiable at every $\tau\in C(\tau_{1},\tau_{2})\subset F$ then $X(\tau)$ is f.m.s differentiable on $ C(\tau_{1},\tau_{2})$.
\end{definition}
\begin{example}
Consider the fractal stochastic process as
\begin{equation}
  X(\tau)=\cos(\tau+\Phi)
\end{equation}
where $\Phi$ is uniformly distributed on $F$. Its correlation function is
\begin{equation}\label{TGfr}
  R(\tau)=\frac{1}{2}\cos(\tau).
\end{equation}
As Eq.\eqref{TGfr} is $F^{\alpha}$-differentiable infinitely hence $X(\tau)$ has f.m.s derivative.
\end{example}
\begin{theorem}
If $X(\tau)$ and $Y(\tau)$ are f.m.s. differentiable at $\tau$, then we have
\begin{equation}
D_{F}^{\alpha}[aX(\tau)+bY(\tau)]=a D_{F}^{\alpha}X(\tau)+b D_{F}^{\alpha}Y(\tau)
\end{equation}
where $a$ and $b$ are constant.
\end{theorem}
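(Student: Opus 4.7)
The plan is to reduce the statement to the additivity and homogeneity of the fractal mean square limit $f.l.i.m$, which follow from the fact that $L_{2}^{\alpha}$ is a linear vector space equipped with the mean square norm $\|\cdot\|$. I would begin by writing down the definition of $D_{F}^{\alpha}$ for the combined process: form the difference quotient
\begin{equation*}
Q_{\epsilon}(\tau)=\frac{[aX(\tau+\epsilon)+bY(\tau+\epsilon)]-[aX(\tau)+bY(\tau)]}{J(\epsilon)},
\end{equation*}
and split it algebraically as
\begin{equation*}
Q_{\epsilon}(\tau)=a\,\frac{X(\tau+\epsilon)-X(\tau)}{J(\epsilon)}+b\,\frac{Y(\tau+\epsilon)-Y(\tau)}{J(\epsilon)}.
\end{equation*}

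Next, denote the candidate limit by $L(\tau)=aD_{F}^{\alpha}X(\tau)+bD_{F}^{\alpha}Y(\tau)$ and estimate $\|Q_{\epsilon}(\tau)-L(\tau)\|$ in $L_{2}^{\alpha}$. Using the triangle inequality in the mean square norm and the homogeneity $\|cZ\|=|c|\,\|Z\|$, I get
\begin{equation*}
\|Q_{\epsilon}(\tau)-L(\tau)\|\le |a|\,\Bigl\|\tfrac{X(\tau+\epsilon)-X(\tau)}{J(\epsilon)}-D_{F}^{\alpha}X(\tau)\Bigr\|+|b|\,\Bigl\|\tfrac{Y(\tau+\epsilon)-Y(\tau)}{J(\epsilon)}-D_{F}^{\alpha}Y(\tau)\Bigr\|.
\end{equation*}
By hypothesis both bracketed norms tend to zero as $\epsilon\to 0$ in the $F_{-}lim$ sense, since $X$ and $Y$ are f.m.s. differentiable at $\tau$. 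Squaring and invoking the definition of f.m.s. derivative from Definition \ref{defsecond} then yields $\underset{\epsilon\to 0}{F_{-}lim}\,E[(Q_{\epsilon}(\tau)-L(\tau))^{2}]=0$, which is precisely the statement that $D_{F}^{\alpha}[aX+bY](\tau)=L(\tau)$.

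I expect the routine part to be the algebraic splitting of the difference quotient, which is immediate. The only slightly delicate point is justifying the triangle inequality for the mean square norm on $L_{2}^{\alpha}$: this requires noting that $L_{2}^{\alpha}$ is a linear space (stated in the Remark following the definition of distance) with inner product $\langle X_{1},X_{2}\rangle=E[X_{1}X_{2}]$, so Minkowski's inequality applies in the usual way. Once that is in hand, the conclusion follows without further obstacle, and no fractal-specific machinery beyond the definition of $D_{F}^{\alpha}$ and of f.l.i.m is needed.
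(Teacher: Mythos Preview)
Your proposal is correct and follows essentially the same approach as the paper: form the difference quotient of $aX+bY$, split it linearly, apply the triangle inequality and homogeneity of the $L_{2}^{\alpha}$ norm, and let both pieces tend to zero by hypothesis. If anything, your version is cleaner, since you consistently use $J(\epsilon)$ in the denominator as required by Definition~\ref{defsecond}.
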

\begin{proof}
 By the norm property, the proof is immediate as
\begin{align}
&\norm[\bigg]{\frac{aX(\tau+\epsilon)+bY(\tau +\epsilon)-aX(\tau)-bX(\tau)}{\epsilon}
-aD_{F}^{\alpha}X(\tau)+b D_{F}^{\alpha}Y(\tau)}\nonumber\\&
\leq \norm[\bigg]{a\bigg[\frac{X(\tau+\epsilon)-X(\tau)}{\tau}-
D_{F}^{\alpha}X(\tau)\bigg]}+
\norm[\bigg]{b\bigg[\frac{Y(\tau +\epsilon)-Y(\tau)}{\epsilon}-D_{F}^{\alpha}Y(\tau)\bigg]},
\end{align}
by hypothesis the last two terms tend to zero as $\epsilon\rightarrow 0$, which completes the proof.
\end{proof}
\begin{theorem}
If $f: F\rightarrow \mathbb{R}$ and $F^{\alpha}$-differentiable at
 $\tau\in F$, and $X(\tau)$ is f.m.s differentiable at $\tau \in F$, then $f(\tau)$, $X(\tau)$ is f.m.s differentiable at $\tau$ and we have
\begin{equation}
  D_{F}^{\alpha}[f(\tau)X(\tau)]=D_{F}^{\alpha}f(\tau) X(\tau)+ f(\tau) D_{F}^{\alpha}X(\tau).
\end{equation}
\end{theorem}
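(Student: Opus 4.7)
The plan is to mimic the classical Leibniz rule by the standard add-and-subtract trick, but carry out each convergence in the $L_2^\alpha$ sense. First I would rewrite the difference quotient at $\tau$ as
\begin{equation*}
\frac{f(\tau+\epsilon)X(\tau+\epsilon) - f(\tau)X(\tau)}{J(\epsilon)} = f(\tau+\epsilon)\,\frac{X(\tau+\epsilon) - X(\tau)}{J(\epsilon)} + \frac{f(\tau+\epsilon) - f(\tau)}{J(\epsilon)}\,X(\tau),
\end{equation*}
and then show that, as $\epsilon \to 0$, the first summand tends in f.m.s.\ to $f(\tau)\,D_F^{\alpha}X(\tau)$ and the second to $D_F^{\alpha}f(\tau)\,X(\tau)$, so that the desired identity follows by linearity of $f.l.i.m$.

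For the second summand, since $f$ is deterministic the $L_2^\alpha$-norm pulls out the scalar factor and the error is $\big|\tfrac{f(\tau+\epsilon) - f(\tau)}{J(\epsilon)} - D_F^{\alpha} f(\tau)\big|\,\|X(\tau)\|$, which tends to zero by the $F^{\alpha}$-differentiability hypothesis on $f$ together with the second-order assumption $\|X(\tau)\|<\infty$. For the first summand I would further add and subtract $f(\tau)\,(X(\tau+\epsilon) - X(\tau))/J(\epsilon)$, so that by the triangle inequality the f.m.s.\ error is bounded by
\begin{equation*}
|f(\tau+\epsilon)|\,\Big\|\frac{X(\tau+\epsilon) - X(\tau)}{J(\epsilon)} - D_F^{\alpha}X(\tau)\Big\| + |f(\tau+\epsilon) - f(\tau)|\,\|D_F^{\alpha}X(\tau)\|.
\end{equation*}
The first piece vanishes because $X$ is f.m.s.\ differentiable at $\tau$ and $f$ is locally bounded near $\tau$; the second vanishes because $f(\tau+\epsilon)\to f(\tau)$ and $\|D_F^{\alpha}X(\tau)\|$ is finite, being the $L_2^\alpha$-limit of second-order objects.

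The main obstacle I anticipate is controlling the coefficient $|f(\tau+\epsilon)|$ as $\epsilon \to 0$, since it multiplies a random quantity whose f.m.s.\ norm is not uniformly bounded in $\epsilon$ a priori. This is resolved by the standard observation that $F^{\alpha}$-differentiability of $f$ at $\tau$ implies $F^{\alpha}$-continuity there, so for $\epsilon$ small enough $|f(\tau+\epsilon)| \le |f(\tau)|+1$, whence the product of a bounded scalar and an f.m.s.\ vanishing quantity is itself f.m.s.\ vanishing. The remaining steps are routine applications of the norm axioms and do not require Theorem \ref{T431} directly, although one could alternatively view the deterministic factor $f(\tau+\epsilon)$ as a degenerate random variable and invoke Theorem \ref{T431} to obtain the same cross-term convergence.
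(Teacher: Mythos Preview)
Your argument is correct and is essentially the same as the paper's: both proofs use the add-and-subtract trick together with the triangle inequality for the $L_2^\alpha$-norm, the only difference being that you insert $f(\tau+\epsilon)X(\tau)$ while the paper inserts $f(\tau)X(\tau+\epsilon)$, which leads to the mirror-image three-term bound. Your explicit justification that $|f(\tau+\epsilon)|$ stays bounded (via $F^\alpha$-continuity of $f$) is a point the paper leaves implicit; note, however, that the intermediate term you name in words does not match the one your displayed bound actually uses---the bound corresponds to inserting $f(\tau+\epsilon)\,D_F^{\alpha}X(\tau)$, which is fine.
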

\begin{proof}
To show this property, consider the norm property as
\begin{align}\label{REWQu}
&\norm[\bigg]{\frac{f(\tau+\epsilon)X(\tau+\epsilon)-f(\tau)X(\tau)}{\tau}-
D_{F}^{\alpha}f(\tau)X(\tau)-f(\tau)D_{F}^{\alpha}X(\tau)}\nonumber\\&
\leq \norm[\bigg]{\frac{f(\tau+\epsilon)X(\tau+\epsilon)-f(\tau)X(\tau+\epsilon)}{\tau}-
D_{F}^{\alpha}f(\tau)X(\tau)}\nonumber\\&
+\norm[\bigg]{\frac{f(\tau)X(\tau+\epsilon)-f(\tau)X(\tau)}{\tau}-f(\tau)D_{F}^{\alpha}X(\tau)}\nonumber\\&
\leq \norm[\bigg]{\bigg[\frac{f(\tau+\epsilon)-f(\tau)}{\epsilon}-D_{F}^{\alpha}
f(\tau)\bigg]X(\tau+\epsilon)}
+\norm[\bigg]{D_{F}^{\alpha}f(\tau)\bigg[X(\tau+\epsilon)-X(\epsilon)\bigg]}\nonumber\\&
+\norm[\bigg]{f(\tau)\bigg[\frac{X(\tau+\epsilon)-X(\tau)}{\epsilon}-D_{F}^{\alpha}f(\tau)]}\nonumber\\&
\leq \bigg|\frac{f(\tau-\epsilon)-f(\tau)}{\epsilon}-D_{F}^{\alpha}f(\tau)\bigg|
\norm[\bigg]{X(\tau+\epsilon)}+\bigg|D_{F}^{\alpha}f(\tau)\bigg|
\norm[\bigg]{X(\tau+\epsilon)-X(\tau)}\nonumber\\&
+\bigg|f(t)\bigg|\norm[\bigg]{\frac{X(\tau+\epsilon)-X(\tau)}{\epsilon}-D_{F}^{\alpha}X(\tau)}
\end{align}
The Eq.\eqref{REWQu} tends to zero as $\epsilon\rightarrow 0$. Thus the proof is complete.
\end{proof}
\begin{theorem}
If a s.p. $X(\tau),\tau\in F$ is $n$ times f.m.s differentiable then we have
\begin{equation}\label{uuyRe}
E\{D_{F}^{\alpha}X(\tau)\}=D_{F}^{\alpha}E\{X(\tau)\}.
\end{equation}
\end{theorem}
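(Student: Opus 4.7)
The plan is to reduce the claim to two ingredients already on the table: the mean-square definition of $D_F^\alpha$ (Definition \ref{defsecond}), which gives convergence in $L_2^\alpha$ of the difference quotient, and Theorem \ref{T431}, which guarantees that f.m.s. convergence can be passed through an expectation of a product. The statement is really about commuting $E$ with the fractal mean-square limit.

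First I would introduce the difference quotient
\begin{equation}
Y_\epsilon = \frac{X(\tau+\epsilon)-X(\tau)}{J(\epsilon)},
\end{equation}
which by hypothesis satisfies $\underset{\epsilon\to 0}{f.l.i.m}\, Y_\epsilon = D_F^\alpha X(\tau)$, both sides being elements of $L_2^\alpha$. Next I would apply Theorem \ref{T431} with the second sequence taken to be the constant random variable $X_{n'}' \equiv 1$ (which is trivially second order and f.m.s. convergent to $1$). This yields
\begin{equation}
\underset{\epsilon\to 0}{F_-lim}\, E\{Y_\epsilon \cdot 1\} = E\{D_F^\alpha X(\tau) \cdot 1\},
\end{equation}
i.e., $E\{Y_\epsilon\} \to E\{D_F^\alpha X(\tau)\}$.

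Then I would use linearity of expectation on the left-hand side to obtain
\begin{equation}
E\{Y_\epsilon\} = \frac{E\{X(\tau+\epsilon)\}-E\{X(\tau)\}}{J(\epsilon)},
\end{equation}
and recognize that the fractal limit of this deterministic difference quotient is exactly $D_F^\alpha E\{X(\tau)\}$ by Definition of the $F^\alpha$-derivative (applied to the ordinary function $\tau \mapsto E\{X(\tau)\}$). Chaining these two identifications gives $E\{D_F^\alpha X(\tau)\} = D_F^\alpha E\{X(\tau)\}$, which is \eqref{uuyRe}.

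The only delicate point I anticipate is justifying the use of Theorem \ref{T431} with a constant sequence: I need the constant $1$ to sit in the $L_2^\alpha$ framework used in the theorem. Since any constant random variable has finite second moment on the probability space $(\mathcal{S},\mathcal{F},\mathbb{P})$, this is immediate, and no further integrability hypothesis on $X(\tau)$ beyond second-order is required. (If one prefers a self-contained argument, the same conclusion follows from the Cauchy--Schwarz-type bound $|E\{Y_\epsilon - D_F^\alpha X(\tau)\}|^2 \le \|Y_\epsilon - D_F^\alpha X(\tau)\|^2 \to 0$, which is how Theorem \ref{T431} is proved in the first place.) The $n$-times differentiability assumption in the hypothesis plays no role beyond ensuring that the first derivative exists in f.m.s.; the identity for higher derivatives follows by induction, reapplying the same argument to $D_F^\alpha X(\tau)$.
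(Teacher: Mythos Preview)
Your proposal is correct and follows essentially the same approach as the paper: write $D_F^\alpha X(\tau)$ as the f.m.s.\ limit of the difference quotient, interchange $E$ with that limit, and then use linearity of expectation to identify the result with $D_F^\alpha E\{X(\tau)\}$. The only difference is one of rigor: the paper simply asserts the interchange $E\{f.l.i.m\,(\cdot)\}=F_-lim\,E\{\cdot\}$ in one line, whereas you justify it explicitly by invoking Theorem~\ref{T431} with the constant second sequence $1$ (or, equivalently, the Cauchy--Schwarz bound), which is a welcome clarification.
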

\begin{proof}
To establish Eq.\eqref{uuyRe}, let recall Definition \eqref{defsecond}, then we have
\begin{align}
  &E\{D_{F}^{\alpha}X(\tau)\}=E\bigg\{\underset{\epsilon \rightarrow 0}{f.l.i.m} \bigg[ \frac{X(\tau+\epsilon)-X(\tau)}{\epsilon}\bigg]\bigg\}\nonumber\\&
=\underset{\epsilon \rightarrow 0}{F_{-}lim}\bigg\{\frac{E[X(\tau+\epsilon)]-E[X(\tau)]}{\epsilon}\bigg\}\nonumber\\&
=D_{F}^{\alpha}E[X(\tau)],
\end{align}
which completes the proof.
\end{proof}

\begin{definition}
The second fractal generalized derivative of a second order s.p.
$X(\tau),\tau \in F$ is defined by
\begin{align}
  &\underset{\epsilon, \epsilon'\rightarrow 0 }{F_{-}lim}\bigg(\frac{1}{\epsilon \epsilon'}\bigg)\Delta_{\epsilon}\Delta_{\epsilon'}R(\tau,s)\nonumber\\&=
\underset{\epsilon, \epsilon'\rightarrow 0 }{F_{-}lim}\bigg(\frac{1}{\epsilon \epsilon'}\bigg)
\bigg[R(\tau+\epsilon,s+\epsilon')-R(\tau+\epsilon,s)
-R(\tau,s+\epsilon')+R(\tau,s)\bigg]
\end{align}
if it exists at $(\tau,\tau)$ and is finite.
\end{definition}
\begin{theorem}\label{REDWa}
If a s.p. $X(\tau)$ is mean square $F^{\alpha}$-differentiable at point $\tau$, then $X(\tau)$ is f.m.s continuous at $\tau$.
\end{theorem}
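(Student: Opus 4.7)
The plan is to mimic the classical argument that differentiability implies continuity, transposed into the $L_2^\alpha$ setting. Fractal mean square continuity at $\tau$ is the assertion $\|X(\tau+\epsilon)-X(\tau)\| \to 0$ as $\epsilon\to 0$, where $\|\cdot\|$ is the $L_2^\alpha$-norm induced by the inner product $\langle Y,Z\rangle = E[YZ]$. The key algebraic step is to write the increment as a product
\begin{equation*}
X(\tau+\epsilon)-X(\tau) \;=\; J(\epsilon)\,\cdot\,\frac{X(\tau+\epsilon)-X(\tau)}{J(\epsilon)},
\end{equation*}
so that the part on the right is exactly the quantity controlled by the hypothesis of f.m.s. $F^\alpha$-differentiability, and the scalar factor $J(\epsilon)$ vanishes in the limit.

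First I would take the $L_2^\alpha$-norm of both sides and pull the scalar $J(\epsilon)$ out, obtaining
\begin{equation*}
\|X(\tau+\epsilon)-X(\tau)\| \;=\; |J(\epsilon)|\,\Big\|\tfrac{X(\tau+\epsilon)-X(\tau)}{J(\epsilon)}\Big\|.
\end{equation*}
Then I would insert and subtract $D_F^\alpha X(\tau)$ inside the second norm and apply the triangle inequality to get
\begin{equation*}
\Big\|\tfrac{X(\tau+\epsilon)-X(\tau)}{J(\epsilon)}\Big\| \;\le\; \Big\|\tfrac{X(\tau+\epsilon)-X(\tau)}{J(\epsilon)} - D_F^\alpha X(\tau)\Big\| \;+\; \|D_F^\alpha X(\tau)\|.
\end{equation*}
By Definition \ref{defsecond} the first term tends to zero under the $F$-limit, and $\|D_F^\alpha X(\tau)\|$ is finite because $D_F^\alpha X(\tau)\in L_2^\alpha$. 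Hence the second factor stays bounded as $\epsilon\to 0$.

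Finally I would invoke the fact that $J(\epsilon)\to 0$ as $\epsilon\to 0$, which comes from the continuity of the staircase function $S_F^\alpha$ entering the definition of $J$; multiplying a vanishing scalar by a bounded quantity gives $\|X(\tau+\epsilon)-X(\tau)\|\to 0$, which is precisely f.m.s. continuity at $\tau$.

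The only delicate point I anticipate is justifying $J(\epsilon)\to 0$: one must read the notation $J(\epsilon)$ in Definition \ref{defsecond} as the $J$-increment associated with the parameter shift $\epsilon$, and then appeal to continuity of $S_F^\alpha\circ w^{-1}$ on $F$. Everything else is a routine application of the norm inequalities in $L_2^\alpha$-space and does not require Theorem \ref{T431} or Theorem \ref{NBBWQ}.
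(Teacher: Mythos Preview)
Your argument is correct and follows the same skeleton as the paper's proof: factor the increment as a vanishing scalar times the difference quotient, then argue that the difference quotient remains bounded in $L_2^\alpha$. The execution differs in one respect. To bound the difference quotient, you insert $D_F^\alpha X(\tau)$ and use the triangle inequality together with Definition~\ref{defsecond} directly; the paper instead rewrites $\big\|[X(\tau+\epsilon)-X(\tau)]/\epsilon\big\|^2$ as the second fractal generalized derivative $\tfrac{1}{\epsilon^2}\Delta_\epsilon\Delta_\epsilon R(\tau,\tau)$ of the correlation function and appeals to its finiteness. Your route is slightly more elementary and self-contained, since it uses only the definition of the f.m.s.\ derivative and basic norm properties, whereas the paper's version implicitly relies on the link between f.m.s.\ differentiability and the existence of the generalized second derivative of $R$ at the diagonal. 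Your use of $J(\epsilon)$ rather than $\epsilon$ is also more faithful to Definition~\ref{defsecond}; the paper's proof writes $\epsilon$ in the denominator, which is a minor notational slip there.
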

\begin{proof}
Since  $\tau+\epsilon\in F$, then we write
\begin{align}
&  \underset{\epsilon \rightarrow 0}{F_{-}lim}~\norm[\bigg]{X(\tau+\epsilon)-X(\tau)}^{2}=\underset{\epsilon \rightarrow 0}{F_{-}lim}~ |\epsilon|^{2}\norm[\bigg]{\frac{[X(\tau+\epsilon)-X(\tau)]}{\epsilon}}^{2}\nonumber\\&=
0\times~\underset{\epsilon \rightarrow 0}{F_{-}lim}~ \bigg(\frac{1}{\epsilon^{2}}\bigg)\Delta_{\epsilon}\Delta_{\epsilon} R(\tau,\tau)=0.
\end{align}
This is the desired conclusion.
\end{proof}

\begin{example}
 Consider a s.p. $X(\tau),\tau\in F$ as
\begin{equation}
  X(\tau)=A\tau,
\end{equation}
where $A$ is a second-order fractal r.v. with mean zero and variance $\sigma^2$. Its correlation function is
\begin{equation}
  R(\tau,s)=\sigma^{2}\tau s.
\end{equation}
The second fractal generalized  derivative of $X(\tau)$ is
\begin{align}
&\underset{\epsilon, \epsilon'\rightarrow 0 }{F_{-}lim}\bigg(\frac{1}{\epsilon \epsilon'}\bigg)\Delta_{\tau}\Delta_{\tau'}R(\tau,s)\nonumber\\&=
\underset{\epsilon, \epsilon'\rightarrow 0 }{F_{-}lim}\bigg(\frac{\sigma^{2}}{\epsilon \epsilon'}\bigg)\bigg[(\tau+\epsilon)(s+\epsilon')-(\tau+\epsilon)s-\tau(s+\tau')
+\tau s \bigg]\nonumber\\&=
\underset{\epsilon, \epsilon'\rightarrow 0 }{F_{-}lim}\bigg(\frac{\sigma^{2}}{\epsilon \epsilon'}\bigg)[\epsilon \epsilon']=\sigma^2<\infty,
\end{align}
which implies that $X(\tau)$ is f.m.s differentiable at every finite $\tau$.
\end{example}
\begin{theorem}
If the second fractal generalized  derivative exist at $(\tau,\tau)$ for every $\tau\in F$. Then the partial derivative
\begin{equation}
  D_{F,\tau}^{\alpha} R(\tau,s),~D_{F,s}^{\alpha} R(\tau,s),~D_{F,\tau}^{\alpha}D_{F,s}^{\alpha}R(\tau,s)
\end{equation}
exists and is finite on $F\times F$.
\end{theorem}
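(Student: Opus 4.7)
The plan is to exploit the two convergence-through-correlation theorems already established: Theorem~\ref{T431} (fractal limits of inner products) and Theorem~\ref{NBBWQ} (a sequence of second order f.r.v.\ converges in mean square iff its correlation $R_{X_nX_{n'}}$ converges). The strategy is to first use the hypothesis to upgrade the process $X(\tau)$ itself to a mean square $F^{\alpha}$-differentiable process, and then read off the three required partial derivatives of $R$ as fractal limits of inner products.

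First I would introduce, for each $\tau\in F$ and each admissible increment $\epsilon>0$, the difference quotient
\begin{equation*}
Y_{\epsilon}(\tau)=\frac{X(\tau+\epsilon)-X(\tau)}{\epsilon},
\end{equation*}
and compute its correlation with $Y_{\epsilon'}(\tau)$:
\begin{equation*}
E\{Y_{\epsilon}(\tau)\,Y_{\epsilon'}(\tau)\}
=\frac{1}{\epsilon\epsilon'}\bigl[R(\tau+\epsilon,\tau+\epsilon')-R(\tau+\epsilon,\tau)-R(\tau,\tau+\epsilon')+R(\tau,\tau)\bigr]
=\frac{1}{\epsilon\epsilon'}\Delta_{\epsilon}\Delta_{\epsilon'}R(\tau,\tau).
\end{equation*}
By hypothesis the right-hand side has a finite fractal limit as $\epsilon,\epsilon'\to 0$. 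Theorem~\ref{NBBWQ} therefore applies to the family $\{Y_{\epsilon}(\tau)\}$ (indexed by $\epsilon\downarrow 0$) and gives a second order limit in mean square, which by Definition~\ref{defsecond} is precisely $D_{F}^{\alpha}X(\tau)$. Hence $X(\tau)$ is f.m.s.\ $F^{\alpha}$-differentiable at every $\tau\in F$ and $D_{F}^{\alpha}X(\tau)\in L_{2}^{\alpha}$.

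Next I would produce the partial derivatives. For fixed $s\in F$, write
\begin{equation*}
\frac{R(\tau+\epsilon,s)-R(\tau,s)}{\epsilon}=E\bigl\{Y_{\epsilon}(\tau)\,X(s)\bigr\}.
\end{equation*}
Since $Y_{\epsilon}(\tau)\xrightarrow{f.l.i.m.} D_{F}^{\alpha}X(\tau)$ and $X(s)$ is itself a second order f.r.v.\ (viewed as a constant sequence), Theorem~\ref{T431} lets me pass the fractal limit inside the expectation to obtain
\begin{equation*}
D_{F,\tau}^{\alpha}R(\tau,s)=E\{D_{F}^{\alpha}X(\tau)\,X(s)\},
\end{equation*}
which is finite on $F\times F$ by Cauchy--Schwarz in $L_{2}^{\alpha}$. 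The symmetric argument yields $D_{F,s}^{\alpha}R(\tau,s)=E\{X(\tau)\,D_{F}^{\alpha}X(s)\}$. For the mixed derivative I would iterate: apply $D_{F,s}^{\alpha}$ to the expression just obtained for $D_{F,\tau}^{\alpha}R(\tau,s)$, again by forming difference quotients in $s$ and invoking Theorem~\ref{T431} with the factors $D_{F}^{\alpha}X(\tau)$ and $[X(s+\epsilon)-X(s)]/\epsilon$, whose limit is $D_{F}^{\alpha}X(s)$. This gives
\begin{equation*}
D_{F,\tau}^{\alpha}D_{F,s}^{\alpha}R(\tau,s)=E\{D_{F}^{\alpha}X(\tau)\,D_{F}^{\alpha}X(s)\},
\end{equation*}
finite on $F\times F$ by Cauchy--Schwarz.

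The main obstacle is the first step: converting the purely analytic hypothesis on $R$ into the probabilistic statement that $D_{F}^{\alpha}X(\tau)$ exists. Theorem~\ref{NBBWQ} as stated handles convergence in a single fixed point via $R_{X_nX_{n'}}(\tau,s)\to R(\tau,s)$, so I have to verify that the Cauchy-type form I need, $E\{Y_{\epsilon}(\tau)Y_{\epsilon'}(\tau)\}\to \text{finite limit}$, really does drive $\|Y_{\epsilon}(\tau)-Y_{\epsilon'}(\tau)\|\to 0$; this is exactly the identity used inside the proof of Theorem~\ref{NBBWQ} (see Eq.~\eqref{zfrd}), so the step is legitimate but deserves explicit invocation. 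Once that bridge is built, everything else is a clean application of Theorem~\ref{T431}.
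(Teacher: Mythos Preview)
Your proposal is correct and follows essentially the same route as the paper: establish that $X(\tau)$ is f.m.s.\ $F^{\alpha}$-differentiable, then identify each partial derivative of $R$ as the fractal limit of an expectation of difference quotients, obtaining $D_{F,\tau}^{\alpha}R=E\{D_{F}^{\alpha}X(\tau)X(s)\}$, the symmetric formula, and $D_{F,\tau}^{\alpha}D_{F,s}^{\alpha}R=E\{D_{F}^{\alpha}X(\tau)D_{F}^{\alpha}X(s)\}$. The only difference is that the paper opens with ``Since $X(\tau)$ is f.m.s differentiable'' without justification, whereas you explicitly supply this bridge via Theorem~\ref{NBBWQ} and the Cauchy identity \eqref{zfrd}; your version is therefore more complete, but not a different argument.
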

\begin{proof}
  Since $X(\tau)$ is f.m.s differentiable, thus we can write
\begin{align}\label{EWqaq2Wq}
 & E\{D_{F,\tau}^{\alpha} X(\tau)X(s)\}=E\bigg\{\underset{\epsilon \rightarrow 0 }{f.l.i.m} \bigg[\frac{X(\tau+\epsilon)-X(\tau)}{\epsilon}\bigg]
X(s)\bigg\}\nonumber\\&
=\underset{\epsilon \rightarrow 0 }{F_{-}lim}~ \bigg(\frac{E\{X(\tau+\epsilon)X(s)-X(\tau)X(s)\}}{\epsilon}\bigg)\nonumber\\&
=\underset{\epsilon \rightarrow 0 }{F_{-}lim} ~[R(\tau+\epsilon,s)-R(\tau,s)]\nonumber\\&
=D_{F,\tau}^{\alpha} R(\tau,s).
\end{align}
Likewise, one can establish that
\begin{equation}\label{EWqaq2}
  E\{ X(\tau)D_{F,s}^{\alpha}X(s)\}=D_{F,s}^{\alpha} R(\tau,s).
\end{equation}
Both Eqs. \eqref{EWqaq2Wq} and \eqref{EWqaq2} exist and are finite on $F\times F$. Finally, we can write
\begin{align}
  &E\{D_{F,\tau}^{\alpha} X(\tau)D_{F,s}^{\alpha}X(s)\}= E\bigg\{\underset{\epsilon,\epsilon' \rightarrow 0 }{f.l.i.m}\bigg[
\frac{X(\tau+\epsilon)-X(\tau)}{\epsilon}\bigg]
\bigg[\frac{X(s+\epsilon')-X(s)}{\epsilon}\bigg]\bigg\}\nonumber\\&
=\underset{\epsilon,\epsilon' \rightarrow 0 }{F_{-}lim}~\frac{1}{\epsilon}~E\bigg\{
\bigg[\frac{X(\tau+\epsilon)X(\tau+\epsilon')-X(\tau+\epsilon)X(s)}{\epsilon'}\bigg]-
\bigg[\frac{X(\tau)X(\tau+\epsilon')-X(\tau)X(s)}{\epsilon'}\bigg]\bigg\}\nonumber\\&
=\underset{\epsilon \rightarrow 0 }{F_{-}lim}~\frac{1}{\epsilon}~\underset{\epsilon' \rightarrow 0 }{F_{-}lim}~\bigg
\{\frac{R(\tau+\epsilon,s+\epsilon')-R(\tau+\epsilon,s)}{\epsilon'}-
\frac{R(\tau,s+\epsilon')-R(\tau,s)}{\epsilon'}\bigg\}\nonumber\\&
=\underset{\epsilon \rightarrow 0 }{F_{-}lim}~\frac{1}{\epsilon}~[D_{F,s}^{\alpha} R(\tau+\epsilon,s)-D_{F,s}^{\alpha} R(\tau,s)]=D_{F,\tau}^{\alpha}D_{F,s}^{\alpha}R(\tau,s),
\end{align}
which exists and is finite on $F\times F$ and completes proof.
\end{proof}
\begin{definition}
A second f.s.p $X(\tau)$ is f.m.s analytic on $F$, if it can be expanded in the f.m.s. convergent Taylor series as
\begin{equation}
  X(\tau)=\sum_{n=0}^{\infty}\frac{(J(\tau)-J(\tau_{0}))^{n}}{n!}
D_{F,\tau}^{n\alpha} X(\tau)\bigg|_{\tau=\tau_{0}},~~~\tau,\tau_{0}\in F.
\end{equation}
\end{definition}
\begin{definition}
 Let $P_{[a,b]}$ be a finite subdivision which is given in Definition \ref{EQASAaaaa},  $X(\tau),~\tau\in F$ be a second order f.s.p. on $[a,b]\subset F$, and $f(\tau,u)$ be a function on $\tau\in [a,b]$. For every $u: F\rightarrow \mathbb{R}$ function, we form the random variable as
\begin{equation}
  Y_{n}(u)=\sum_{i=1}^{n}f(\tau_{i}',u)X(\tau_{i}')(J(\tau_{i})-J(\tau_{i-1}))
\end{equation}
Since $L_{2}^{\alpha}$-space is linear, so we have $Y_{n}(u)\in L_{2}^{\alpha}$.
\end{definition}
\begin{definition}
  If for every $u$
\begin{equation}
 \underset{n\rightarrow \infty  \Delta_{n} \rightarrow0}{f.l.i.m} Y_{n}(u)=Y(u),
\end{equation}
exists for some sequence of subdivision $P$ defined in Definition \ref{EQASAaaaa}, then the $ Y(u)$ is called the definite fractal mean square integral (f.m.s.i) of $f(\tau,u)X(\tau)$ over the interval $[a,b]$ or mean square $F^{\alpha}$-integral  and it is denoted by
\begin{equation}\label{RRRWWAAA}
  Y(u)=\int_{C(a,b)}f(\tau,u)X(\tau)d_{F}^{\alpha}\tau.
\end{equation}
\end{definition}

\begin{theorem}
The f.s.p $Y(u)$ defined by Eq.\eqref{RRRWWAAA} exists if and only if the fractal double integral
\begin{equation}
  \int_{C(a,b)}\int_{C(a,b)} f(\tau,u)f(s,u) R(\tau,s)d_{F}^{\alpha}\tau d_{F}^{\alpha}s
\end{equation}
exists and is finite.
\end{theorem}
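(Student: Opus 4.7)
The plan is to reduce mean square convergence of the Riemann-type sums $Y_n(u)$ to the existence of a deterministic double fractal integral, using Theorem \ref{NBBWQ} as the bridge. Since $Y_n(u) \in L_2^{\alpha}$ for every $n$, the family $\{Y_n(u)\}$ converges in mean square to some $Y(u) \in L_2^{\alpha}$ if and only if the quantity $E[Y_n(u) Y_m(u)]$ has a finite limit as $n, m \to \infty$ along sequences of subdivisions whose mesh sizes tend to zero. This is exactly the content of Theorem \ref{NBBWQ} applied to the family $\{Y_n(u)\}$, and it converts the problem into a convergence question for purely numerical sums, no longer involving randomness directly.

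Next I would expand $E[Y_n(u) Y_m(u)]$ by linearity of expectation. Writing $\Delta J_i = J(\tau_i) - J(\tau_{i-1})$ for a first subdivision and $\Delta J_j' = J(s_j) - J(s_{j-1})$ for a second subdivision of $[a,b]$, one obtains
\[
E[Y_n(u)\, Y_m(u)] = \sum_{i=1}^{n}\sum_{j=1}^{m} f(\tau_i', u)\, f(s_j', u)\, R(\tau_i', s_j')\, \Delta J_i\, \Delta J_j',
\]
which is precisely a Riemann-type approximating sum for the fractal double integral of $f(\tau,u) f(s,u) R(\tau,s)$ over $C(a,b) \times C(a,b)$ in the sense of Definition \ref{EQASAaaaa}, applied iteratively in the two fractal variables.

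Combining these two observations yields both implications. The sums $E[Y_n(u) Y_m(u)]$ possess a finite limit as the mesh sizes tend to zero if and only if the fractal double integral exists and is finite; by Theorem \ref{NBBWQ} this is equivalent to mean square convergence of $Y_n(u)$ to some $Y(u)$, i.e.\ to the existence of the f.m.s.i.\ defined in Eq.\eqref{RRRWWAAA}. For the converse direction, once $Y_n(u) \to Y(u)$ in mean square, Theorem \ref{T431} ensures that $E[Y_n(u) Y_m(u)] \to E[Y(u)^2]$, which identifies the common limit as the double fractal integral and finishes the argument.

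The main obstacle I anticipate is the double-limit aspect. One must verify that the double Riemann sum converges as $n, m \to \infty$ independently, and independently of the choice of tag points $\tau_i', s_j'$, so that the partial sums can legitimately be compared with a genuine two-variable $F^{\alpha}$-integral rather than merely an iterated one. This requires a uniform Cauchy-type estimate for $F^{\alpha}$-integration on $F \times F$, together with a fractal analogue of Fubini's theorem so that the double integral coincides with the two iterated one-dimensional $F^{\alpha}$-integrals. Once this uniformity is in place, the equivalence follows almost mechanically from Theorem \ref{NBBWQ}.
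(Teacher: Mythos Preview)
Your approach is essentially the same as the paper's: reduce the existence of the f.m.s.\ integral to the convergence criterion of Theorem~\ref{NBBWQ}, applied to the sequence $Y_n(u)$ of Riemann-type sums. The paper's proof is in fact only a single sentence that names Theorem~\ref{NBBWQ} and indicates the substitution $X_n(\tau)\leftarrow Y_n(u)$, $\tau\leftarrow u$, $n_0\leftarrow\infty$; your write-up supplies the details the paper omits (the expansion of $E[Y_n(u)Y_m(u)]$ as a double Riemann sum, and the identification of the limit via Theorem~\ref{T431}), and even flags the double-limit/Fubini issue that neither proof resolves explicitly.
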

\begin{proof}
This theorem is an immediate result of the convergence in fractal mean square Theorem \ref{NBBWQ}. Namely, if we choose
\begin{equation}
  X_{n}(\tau)=\sum_{i=1}^{n}f(\tau_{i}',\tau)X(\tau_{i}')(J(\tau_{i})-J(\tau_{i-1})),~~
\tau=u,~~n_{0}=\infty.
\end{equation}
\end{proof}
\begin{definition}
An improper f.m.s integral is defined by
\begin{equation}
  \int_{C(a,\infty)}f(\tau,u)X(\tau)d_{F}^{\alpha}\tau=\underset{b \rightarrow\infty }{f.l.i.m}\int_{C(a,b)}f(\tau,u)X(\tau)d_{F}^{\alpha}\tau.
\end{equation}
It is easy to see that it exists if, and only if, the improper fractal double integral
\begin{equation}
  \int_{C(a,\infty)}\int_{C(a,\infty)}f(\tau,u)f(s,u)R(\tau,s)d_{F}^{\alpha}\tau d_{F}^{\alpha}s=\underset{b \rightarrow \infty }{F_{-}lim}\int_{C(a,b)}\int_{C(a,b)}f(\tau,u)f(s,u)R(\tau,s)d_{F}^{\alpha}\tau d_{F}^{\alpha}s,
\end{equation}
exists and is finite.
\end{definition}
\begin{remark}
The mean square integral of f.s.p $X(\tau),~\tau\in F$  is defined by using Eq.\eqref{RRRWWAAA} and setting $f(\tau,u)=1$ as follows:
\begin{align}
  Z(\tau)&=\int_{C(t_{0},\tau)}X(\theta)d_{F}^{\alpha}\theta\\
&=\underset{\Delta^{\alpha}J(\theta_{i}),\Delta^{\alpha}J(\theta_{k})  \rightarrow 0}{F_{-}lim}~E\bigg\{\bigg[\sum_{i}X(\theta_{i})\Delta^{\alpha}J(\theta_{i})-\sum_{k}
X(\theta_{k})\Delta^{\alpha}J(\theta_{k})\bigg]^{2}\bigg\},
\end{align}
where $\theta_{0}<\theta_{1}<...<\theta$ ,~$\theta_{i}=w(t_{i})$,  $\Delta^{\alpha}J(\theta_{k})=J(\theta_{k})-J(\theta_{k-1})$ and $\Delta^{\alpha}J(\theta_{i})=J(\theta_{i})-J(\theta_{i-1})$.
\end{remark}
\begin{theorem}
If $X(\tau)$ is f.m.s continuous on $[a,\tau]\subset F$, then
\begin{equation}
  Y(\tau)=\int_{C(a,\tau)}X(s)d_{F}^{\alpha}s
\end{equation}
is f.m.s continuous on $[a,\tau]$. It is also f.m.s differentiable on $[a,\tau]$ with
\begin{equation}
  D_{F,\tau}^{\alpha}Y(\tau)=X(\tau).
\end{equation}
\end{theorem}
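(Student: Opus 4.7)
The plan is to establish the two assertions separately using the isometry that identifies the $L_2^\alpha$-norm of a fractal mean square integral with a double fractal integral of the correlation function, in analogy with the classical Soong-style proof. Throughout I will use $\Delta J(\epsilon)=J(\tau+\epsilon)-J(\tau)$ and the representation
\begin{equation}
Y(\tau+\epsilon)-Y(\tau)=\int_{C(\tau,\tau+\epsilon)}X(s)\,d_F^{\alpha}s,
\end{equation}
which follows from additivity of the $F^{\alpha}$-integral over segments.

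For the continuity claim, I would first note that $f$.m.s.\ continuity of $X$ on $[a,\tau]$ forces $\tau\mapsto R(\tau,\tau)=\|X(\tau)\|^2$ to be continuous, hence bounded by some $M$ on any compact sub-segment. Applying the criterion of the previous theorem (the one characterising existence of a f.m.s.\ integral via the double integral of $R$), I would write
\begin{equation}
\bigl\|Y(\tau+\epsilon)-Y(\tau)\bigr\|^{2}=\int_{C(\tau,\tau+\epsilon)}\int_{C(\tau,\tau+\epsilon)}R(s_1,s_2)\,d_F^{\alpha}s_1\,d_F^{\alpha}s_2,
\end{equation}
and bound $|R(s_1,s_2)|\le M$ via Cauchy--Schwarz, giving $\|Y(\tau+\epsilon)-Y(\tau)\|^{2}\le M\,(\Delta J(\epsilon))^2$. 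Since $J$ is continuous and strictly increasing, $\Delta J(\epsilon)\to 0$ as $\epsilon\to 0$, yielding f.m.s.\ continuity of $Y$.

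For the differentiability claim, the idea is to subtract off the constant piece: since integrating the constant $X(\tau)$ over $C(\tau,\tau+\epsilon)$ yields $X(\tau)\,\Delta J(\epsilon)$, I would write
\begin{equation}
\frac{Y(\tau+\epsilon)-Y(\tau)}{\Delta J(\epsilon)}-X(\tau)=\frac{1}{\Delta J(\epsilon)}\int_{C(\tau,\tau+\epsilon)}\bigl[X(s)-X(\tau)\bigr]\,d_F^{\alpha}s,
\end{equation}
and then compute the $L_2^{\alpha}$-norm squared as the double integral of $E\{[X(s_1)-X(\tau)][X(s_2)-X(\tau)]\}$ divided by $(\Delta J(\epsilon))^2$. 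A Cauchy--Schwarz bound in $L_2^{\alpha}$ turns this into $\bigl(\tfrac{1}{\Delta J(\epsilon)}\int_{C(\tau,\tau+\epsilon)}\|X(s)-X(\tau)\|\,d_F^{\alpha}s\bigr)^{2}$. Given any $\eta>0$, f.m.s.\ continuity provides $\delta>0$ with $\|X(s)-X(\tau)\|<\eta$ whenever $s\in C(\tau,\tau+\epsilon)$ and $|\epsilon|<\delta$; the bracketed quantity is then at most $\eta$, so the whole expression is at most $\eta^{2}$ and the limit is $0$. This delivers $D_{F,\tau}^{\alpha}Y(\tau)=X(\tau)$, and by Theorem \ref{REDWa} (differentiability implies f.m.s.\ continuity) reaffirms the continuity part.

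The main obstacle I expect is the justification of the identity $\int_{C(\tau,\tau+\epsilon)}X(\tau)\,d_F^{\alpha}s=X(\tau)\,\Delta J(\epsilon)$, which must be read as an equality in $L_2^{\alpha}$ for a random, but $s$-constant, integrand; this boils down to noting that the Riemann-type sums telescope to $X(\tau)\,[S_F^{\alpha}(w^{-1}(\tau+\epsilon))-S_F^{\alpha}(w^{-1}(\tau))]$ deterministically, so the f.l.i.m.\ is exact. Secondary care is needed in invoking Cauchy--Schwarz inside the double $F^{\alpha}$-integral, but this is a direct consequence of the inner-product structure on $L_2^{\alpha}$ introduced earlier and presents no new difficulty.
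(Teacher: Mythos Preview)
Your proposal is correct and follows essentially the same route as the paper: for differentiability you both rewrite the difference quotient minus $X(\tau)$ as $(\Delta J(\epsilon))^{-1}\int_{C(\tau,\tau+\epsilon)}[X(s)-X(\tau)]\,d_F^{\alpha}s$ and bound its norm by $\sup_{s\in[\tau,\tau+\epsilon]}\|X(s)-X(\tau)\|$, which vanishes as $\epsilon\to 0$ by f.m.s.\ continuity of $X$. The only cosmetic difference is that the paper omits a direct continuity argument and simply infers it from differentiability via the earlier result that f.m.s.\ differentiability implies f.m.s.\ continuity, whereas you first give an independent continuity proof through the double-integral isometry and then note that same implication at the end.
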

\begin{proof}
  We only prove the second part on $F^{\alpha}$-differentiability, since the first part is evident using Theorem \ref{REDWa}. Let us consider
\begin{align}
 & \norm[\bigg]{\frac{1}{\epsilon}\bigg[\int_{C(a,\tau+\epsilon)}X(s)d_{F}^{\alpha}s-
\int_{C(a,\tau)}X(s)d_{F}^{\alpha}s\bigg]-X(\tau)}\nonumber\\&=
\norm[\bigg]{\frac{1}{\epsilon}\int_{C(a,\tau+\epsilon)}
[X(s)-X(\tau)]d_{F}^{\alpha}s}\nonumber\\& \leq
|\frac{1}{\epsilon}|\int_{C(a,\tau+\epsilon)}
\norm[\bigg]{[X(s)-X(\tau)]}d_{F}^{\alpha}s\nonumber\\& \leq
\max_{s\in [\tau,\tau+\epsilon]} \norm[\bigg]{X(s)-X(\tau)}
\end{align}
as $\epsilon\rightarrow 0$ from the hypothesis, we arrive at the result.
\end{proof}
\begin{corollary}
If $X(\tau)$ is f.m.s integrable on $F$ and $f(\tau,s)$ if $F$-continuous on $F\times F$ with a finite first partial derivative
$D_{F,\tau}^{\alpha} f(\tau,s)$, then the f.m.s derivative of
\begin{equation}
  Y(\tau)=\int_{C(a,\tau)}f(\tau,s)X(s)d_{F}^{\alpha}s
\end{equation}
exists at all $\tau\in F$, and
\begin{equation}
 D_{F,\tau}^{\alpha}Y(\tau)=\int_{C(a,\tau)} D_{F,\tau}^{\alpha}f(\tau,s) X(s)d_{F}^{\alpha}s+f(\tau,\tau)X(\tau).
\end{equation}
\end{corollary}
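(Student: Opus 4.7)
The plan is to interpret the statement as a fractal analogue of the Leibniz integral rule, where the $\tau$--dependence enters both through the integrand $f(\tau,s)$ and through the upper limit of integration. The natural starting point is to form the difference quotient $[Y(\tau+\epsilon)-Y(\tau)]/J(\epsilon)$ and split it by adding and subtracting the intermediate expression $\int_{C(a,\tau+\epsilon)} f(\tau,s)X(s)\,d_F^{\alpha}s$. This produces two pieces:
\begin{align*}
\frac{Y(\tau+\epsilon)-Y(\tau)}{J(\epsilon)}
&= \int_{C(a,\tau+\epsilon)}\frac{f(\tau+\epsilon,s)-f(\tau,s)}{J(\epsilon)}X(s)\,d_F^{\alpha}s\\
&\quad + \frac{1}{J(\epsilon)}\int_{C(\tau,\tau+\epsilon)}f(\tau,s)X(s)\,d_F^{\alpha}s.
\end{align*}
The triangle inequality in the $L_2^{\alpha}$-norm, applied to this decomposition minus the proposed limit $\int_{C(a,\tau)}D_{F,\tau}^{\alpha}f(\tau,s)X(s)\,d_F^{\alpha}s + f(\tau,\tau)X(\tau)$, then reduces the problem to showing each of the two pieces converges in f.m.s.\ to its designated part of the limit.

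For the boundary piece, I would observe that $f(\tau,s)X(s)$ is f.m.s.\ continuous in $s$ at $s=\tau$: the $F$-continuity of $f$ supplies convergence $f(\tau,s)\to f(\tau,\tau)$, while the previous theorem (which gives that $\int_{C(a,\tau)}X(s)\,d_F^{\alpha}s$ is f.m.s.\ differentiable with derivative $X(\tau)$) already tells us that $X$ itself is f.m.s.\ continuous wherever $Y$ has been differentiated, hence that the boundary term $\frac{1}{J(\epsilon)}\int_{C(\tau,\tau+\epsilon)}f(\tau,s)X(s)\,d_F^{\alpha}s$ tends in f.m.s.\ to $f(\tau,\tau)X(\tau)$. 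Concretely, I would mimic the estimate in the proof of that theorem, bounding the norm by $\max_{s\in[\tau,\tau+\epsilon]}\|f(\tau,s)X(s)-f(\tau,\tau)X(\tau)\|$ and letting $\epsilon\to 0$.

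For the first piece, the key identity to exploit is that, by hypothesis, $[f(\tau+\epsilon,s)-f(\tau,s)]/J(\epsilon)\to D_{F,\tau}^{\alpha}f(\tau,s)$ pointwise and is finite. I would estimate
\[
\left\|\int_{C(a,\tau+\epsilon)}\!\!\frac{f(\tau+\epsilon,s)-f(\tau,s)}{J(\epsilon)}X(s)\,d_F^{\alpha}s
-\int_{C(a,\tau)}D_{F,\tau}^{\alpha}f(\tau,s)X(s)\,d_F^{\alpha}s\right\|
\]
by adding and subtracting $\int_{C(a,\tau)}\!\!\frac{f(\tau+\epsilon,s)-f(\tau,s)}{J(\epsilon)}X(s)\,d_F^{\alpha}s$, yielding a residual on $C(\tau,\tau+\epsilon)$ (handled as above since the quotient is locally bounded by the hypothesis on $D_{F,\tau}^{\alpha}f$) plus a term $\int_{C(a,\tau)}\bigl[\frac{f(\tau+\epsilon,s)-f(\tau,s)}{J(\epsilon)}-D_{F,\tau}^{\alpha}f(\tau,s)\bigr]X(s)\,d_F^{\alpha}s$, which tends to zero in f.m.s.\ by passage of the fractal limit inside the integral.

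The principal obstacle is precisely this last interchange of the $F_{-}$lim with the $F^{\alpha}$-integral, which is not a formal manipulation: it requires either a dominated convergence style argument in the $L_2^{\alpha}$-sense, or a uniform-in-$s$ version of the hypothesis $D_{F,\tau}^{\alpha}f(\tau,s)$ being finite. I would address this by invoking Theorem \ref{NBBWQ}: letting $g_n(s):=[f(\tau+\epsilon_n,s)-f(\tau,s)]/J(\epsilon_n)$ for $\epsilon_n\to 0$, the associated double $F^{\alpha}$-integrals of $g_n(s)g_n(s')R(s,s')$ converge to those of $D_{F,\tau}^{\alpha}f(\tau,s)D_{F,\tau}^{\alpha}f(\tau,s')R(s,s')$ by the finiteness and pointwise convergence of $g_n$ together with the integrability implicit in $X$ being a second-order f.s.p., which secures f.m.s.\ convergence. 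Combining the three estimates and letting $\epsilon\to 0$ then yields the claimed formula.
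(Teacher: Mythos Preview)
Your proposal is correct and in fact considerably more detailed than what the paper offers: the paper gives no proof at all beyond the sentence ``The proof is obvious,'' noting only that the result is the fractal mean-square counterpart of the classical Leibniz rule. Your decomposition via the intermediate term $\int_{C(a,\tau+\epsilon)} f(\tau,s)X(s)\,d_F^{\alpha}s$, the treatment of the boundary piece by mimicking the estimate in the preceding theorem, and the handling of the interior piece by passing the limit through the integral constitute exactly the standard argument one would supply to fill in that ``obvious.''

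One remark: you are right to flag the interchange of $F_{-}$lim with the $F^{\alpha}$-integral as the genuine technical point. Your appeal to Theorem~\ref{NBBWQ} is the natural route in this framework, but note that making it fully rigorous would require some uniform control on the difference quotients $g_n(s)$ (e.g.\ $F$-continuity of $D_{F,\tau}^{\alpha}f$ in both variables, or a fractal dominated-convergence hypothesis) so that the double integrals of $g_n(s)g_{n'}(s')R(s,s')$ actually converge. The paper's stated hypotheses (``$F$-continuous'' and ``finite first partial derivative'') are somewhat informal on this point, so your caution there is well placed rather than a defect in your argument.
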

The proof is obvious. This result may called the fractal mean square counterpart of the Leibniz rule in ordinary calculus.

\begin{theorem}
  If $X(\tau)$ be m.s $F^{\alpha}$-differentiable on $F$, and let $f:F\rightarrow \mathbb{R}$ be $F$-continuous on $F\times F$, whose partial derivative $D_{F,s}^{\alpha} f(\tau,s)$ exists. If
\begin{equation}
  Y(\tau)=\int_{C(a,\tau)}f(\tau,s)D_{F,s}^{\alpha}X(s)d_{F}^{\alpha}s,
\end{equation}
then
\begin{equation}
  Y(\tau)=f(\tau,s)X(s)\bigg|_{a}^{\tau}-
\int_{C(a,\tau)}D_{F,s}^{\alpha}f(\tau,s)X(s)d_{F}^{\alpha}s.
\end{equation}
\end{theorem}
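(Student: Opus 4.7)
The plan is to mimic the classical derivation of integration by parts, but using the fractal mean square product rule and the fractal version of the fundamental theorem of calculus that have just been established in the preceding results.

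First, I would apply the fractal mean square product rule (the Leibniz-type rule for $D_F^\alpha$ proved earlier, taken with respect to the variable $s$ and with $f(\tau,\cdot)$ playing the role of the scalar factor) to the process $s \mapsto f(\tau,s) X(s)$. Since $f$ is $F$-continuous in $s$ with $D_{F,s}^\alpha f(\tau,s)$ existing, and $X(s)$ is m.s.\ $F^\alpha$-differentiable, the product rule yields
\begin{equation*}
D_{F,s}^\alpha\bigl[f(\tau,s) X(s)\bigr] = D_{F,s}^\alpha f(\tau,s)\, X(s) + f(\tau,s)\, D_{F,s}^\alpha X(s).
\end{equation*}
Rearranging gives the integrand of $Y(\tau)$ as a difference of an exact $F^\alpha$-derivative and a remainder term.

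Next, I would f.m.s.\ integrate both sides of this identity over the segment $C(a,\tau)$ with respect to $d_F^\alpha s$, using the linearity of the fractal mean square integral. For the exact-derivative term, I would invoke the fundamental theorem of fractal mean square calculus proved just above (which gives $D_{F,\tau}^\alpha Y(\tau) = X(\tau)$ and, by the standard converse argument, $\int_{C(a,\tau)} D_{F,s}^\alpha G(s)\, d_F^\alpha s = G(\tau) - G(a)$ for any f.m.s.\ $F^\alpha$-differentiable $G$ with f.m.s.\ continuous derivative). Applied to $G(s) = f(\tau,s) X(s)$, this evaluates the integral of the derivative term as $f(\tau,s) X(s)\big|_a^\tau$. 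Solving the resulting identity for $Y(\tau) = \int_{C(a,\tau)} f(\tau,s) D_{F,s}^\alpha X(s)\, d_F^\alpha s$ then produces the desired formula.

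The main obstacle is the justification of the fundamental theorem step: one needs $s \mapsto f(\tau,s) X(s)$ to be such that $D_{F,s}^\alpha[f(\tau,s) X(s)]$ is f.m.s.\ continuous on $C(a,\tau)$, so that the fractal Newton--Leibniz formula derived from the previous theorem is legitimately applicable. This follows from combining the $F$-continuity of $f$ and $D_{F,s}^\alpha f(\tau,\cdot)$ with the f.m.s.\ continuity of $X$ and $D_{F,s}^\alpha X$ (the latter is guaranteed by Theorem \ref{REDWa}, since m.s.\ $F^\alpha$-differentiability implies f.m.s.\ continuity), together with the product-rule bound used in the proof of the corresponding product differentiation theorem. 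Once this regularity is in hand, the proof collapses to two lines of algebra, so I would state the regularity observation briefly and then present the product-rule-plus-integrate computation as the body of the proof.
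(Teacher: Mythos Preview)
Your proposal is correct and coincides with what the paper intends: the paper does not give a detailed argument at all, stating only that ``the proof is straightforward'' and identifying the result as the fractal mean-square analogue of the fundamental theorem of calculus. Your product-rule-plus-integrate derivation is exactly the straightforward argument the authors have in mind, and your care about the regularity needed to apply the Newton--Leibniz step (which the paper does not spell out) is more than the paper itself supplies.
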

 The proof is straightforward. This property is fractal mean square counter part of fundamental theorem of ordinary calculus and may called the fundamental theorem of fractal mean square calculus.

\begin{example}
Let us consider stochastic second order mean square on fractal curve as
\begin{equation}\label{R58FE}
  (D_{F}^{\alpha})^{2}X(\tau)+A^{2}X(\tau)=0,~~~~X(\tau)|_{\tau=0}=X_{0},~~ \textmd{and} ~~~D_{F}^{\alpha}X(\tau)|_{\tau=0}=X_{1}.
\end{equation}
where $A^2$ is  Beta random variable $Beta(\mu,\nu)$, on fractal curve, namely,
\begin{equation}
  E[A^2]=\frac{\mu}{\mu+\nu},~~~~~Var[A^2]=\frac{\mu\nu}{(\mu+\nu)^2(\mu+\nu+1)}
\end{equation}
and it is independent from $X_{0}$ and $X_{1}$. To find solution of Eq.\eqref{R58FE}, we use Frobenius like-method and consider an infinite series solution as
\begin{equation}\label{ZAQ963}
  X(\tau)=\sum_{m=0}X_{m}J(\tau)^{m},
\end{equation}
where $X_{m}$ are random variables on fractal curves. The f.m.s derivative from both sides of Eq.\eqref{ZAQ963} and a computation gives
\begin{equation}\label{QQQQQ}
  (D_{F}^{\alpha})^{2}X(\tau)=\sum_{m=0}(m+2)(m+1)X_{m+2}J(\tau)^{m}.
\end{equation}
By replacing Eq.\eqref{ZAQ963} and Eq.\eqref{QQQQQ} into Eq.\eqref{R58FE} we obtain
\begin{equation}
\sum_{m=0}(m+2)(m+1)X_{m+2}+A^{2}X_{m}J(\tau)^{m}=0
\end{equation}
It follows that
\begin{equation}
  X_{m+2}=-\frac{A^{2}X_{m}}{(m+2)(m+1)}.
\end{equation}
By some manipulations, and using initial conditions, we get the solution as
\begin{equation}\label{jhhu}
  X(\tau)=X_{0}\cos(A J(\tau))+\frac{X_{1}}{A}\sin(A J(\tau))
\end{equation}
where
\begin{align}
  \cos(A J(\tau))&=\sum_{m=0}\frac{(-1)^{m}A^{2m}}{(2m)!}J(\tau)^{2m},\nonumber\\~~~\sin(A J(\tau))&=\sum_{m=0}\frac{(-1)^{m}A^{2m+1}}{(2m+1)!}J(\tau)^{2m+1}
\end{align}
\end{example}
The truncated Taylor series of solution Eq.\eqref{jhhu} is
\begin{equation}
  X_{N}(\tau)=X_{0}\sum_{m=0}^{N}\frac{(-1)^{m}A^{2m}}{(2m)!}
J(\tau)^{2m}+
X_{1}\sum_{m=0}^{N}\frac{(-1)^{m}A^{2m}}{(2m+1)!}J(\tau)^{2m+1}
\end{equation}
The mean of $X_{N}(\tau)$ by Definition \ref{DESW45} is
\begin{equation}\label{ESwqa}
  E[X_{N}(\tau)]=E[X_{0}]\sum_{m=0}^{N}\frac{(-1)^{m}E[A^{2m}]}{(2m)!}
J(\tau)^{2m}+
E[X_{1}]\sum_{m=0}^{N}\frac{(-1)^{m}E[A^{2m}]}{(2m+1)!}J(\tau)^{2m+1}.
\end{equation}
To derive variance of $X(\tau)$, let us first calculate
\begin{align}\label{Qawq47}
  E[X_{N}(\tau)^{2}]&=\sum_{m=0}^{N}\bigg(\frac{E[X_{0}]^{2}E[(A^{2})^{2m}]}{((2m)!)^2}J(\tau)^{4m}
+\frac{E[X_{1}^2]E[(A^{2})^{2m}]}{((2m+1)!)^2}\bigg)J(\tau)^{4m+2}\nonumber\\&+
2E[X_{0}X_{1}]\sum_{n=0}^{N}\sum_{m=0}^{N}\frac{(-1)^{n+m}E[(A^2)^{n+m}]}
{(2n)!(2m+1)!}J(\tau)^{2(n+m)+1}
\end{align}
By approximating  Eqs.\eqref{ESwqa} and \eqref{Qawq47}, we arrive at
\begin{align}
   E[X_{N}(\tau)]&=E[X_{0}]-\frac{E[A^2]}{2}J(\tau)^2+E[X_{1}]J(\tau)-
\frac{E[A^{2}]}{3!}J(\tau)^{3}+\frac{E[A^{4}]}{4!}J(\tau)^4+
\frac{E[A^{4}]}{5!}J(\tau)^5+...\nonumber\\
E[X_{N}(\tau)^{2}]&=E[X_{0}^2]+2E[X_{0}X_{1}]J(\tau)+E[X_{1}^{2}]J(\tau)^{2}+...
\end{align}
If we choose $E[X_{0}]=1,~E[A^2]=2/3,~E[X_{1}]=1,~E[X_{0}^2],~E[X_{1}^2]=1,~E[X_{0}X_{1}]=1$, we have
\begin{align}\label{xuuu}
   E[X_{N}(\tau)]&=1+J(\tau)-\frac{1}{3}J(\tau)^{2}-\frac{1}{9}J(\tau)^{3}+...\nonumber\\
E[X_{N}(\tau)^{2}]&=1+2J(\tau)+J(\tau)^{2}+...
\end{align}
\begin{figure}[H]
  \centering
  \includegraphics[scale=0.5]{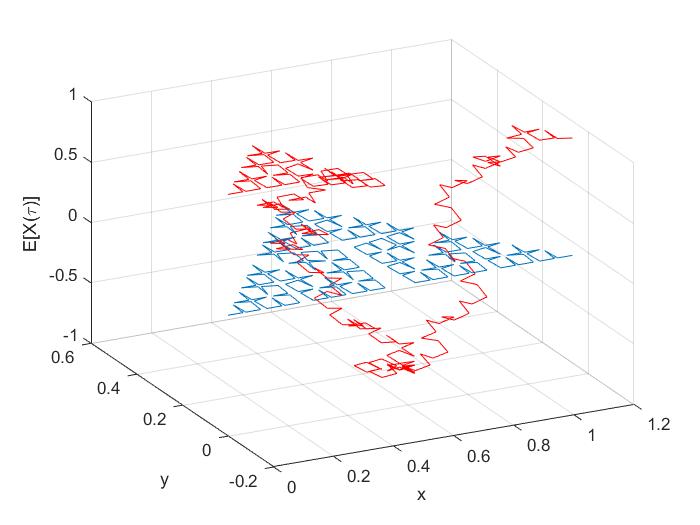}
  \caption{Graph of Eq. \eqref{ESwqa} choosing $E[X_{1}]=0, E[X_{0}]=1, E[A^2]=4$ }\label{ggg2}
\end{figure}
In Figure \ref{ggg2}, we have plotted Eq. \eqref{ESwqa}.\\
\textbf{Acknowledgment } Cristina Serpa acknowledges partial funding by national funds through FCT - Foundation for Science and Technology, project reference: UIDB/04561/2020.

\section{Conclusion \label{4g}}

In this work, we have generalized random variables and processed them on fractal curves by defining mean, variance, and correlation functions. The  second random variable,  the mean square convergent, fundamental fractal in probability, converges in probability, mean square $F^{\alpha}$-continuous, mean square $F^{\alpha}$-derivative, and mean square $F^{\alpha}$-integral are defined to formulate a new framework on fractal curves. This framework is based on new mathematical models for science and physics to apply processes with fractal structure.

\bibliographystyle{unsrt}
\bibliography{HPM222}

\begin{thebibliography}{10}

\bibitem{b-1}
B.~B. Mandelbrot.
\newblock {\em The fractal geometry of nature}.
\newblock WH freeman New York, 1982.

\bibitem{falconer1999techniques}
K.~Falconer.
\newblock {\em Fractal geometry: mathematical foundations and applications}.
\newblock John Wiley \& Sons, 2004.

\bibitem{freiberg2002harmonic}
U.~Freiberg and M.~Z{\"a}hle.
\newblock Harmonic calculus on fractals-a measure geometric approach i.
\newblock {\em Potential analysis}, 16(3):265--277, 2002.

\bibitem{ma-7}
M.~T. Barlow and E.~A. Perkins.
\newblock Brownian motion on the sierpinski gasket.
\newblock {\em Probab. Theory Rel.}, 79(4):543--623, nov 1988.

\bibitem{samayoa2022map}
D.~S. Ochoa, L.~D. Adame, and A.~Kryvko.
\newblock Map of a bending problem for self-similar beams into the fractal continuum using the euler--bernoulli principle.
\newblock {\em Fractal and Fractional}, 6(5):230, 2022.

\bibitem{ma-12}
M.~L. Lapidus, G.~Radunovi{\'{c}}, and D.~{\v{Z}}ubrini{\'{c}}.
\newblock {\em Fractal Zeta Functions and Fractal Drums}.
\newblock Springer International Publishing, 2017.

\bibitem{ma-13}
R.~S. Strichartz.
\newblock {\em Differential Equations on Fractals}.
\newblock Princeton University Press, jun 2018.

\bibitem{stillinger1977axiomatic}
F.~H. Stillinger.
\newblock Axiomatic basis for spaces with noninteger dimension.
\newblock {\em J. Math. Phys.}, 18(6):1224--1234, 1977.

\bibitem{ma-6}
V.~E. Tarasov.
\newblock {\em Fractional Dynamics}.
\newblock Springer Berlin Heidelberg, 2010.

\bibitem{ma-8}
J.~Kigami.
\newblock {\em Analysis on Fractals}.
\newblock Cambridge University Press, jun 2001.

\bibitem{ma-3}
M.~Kesseböhmer, T.~Samuel, and H.~Weyer.
\newblock A note on measure-geometric laplacians.
\newblock {\em Monatsh. Math.}, 181(3):643--655, apr 2016.

\bibitem{parvate2009calculus}
A.~Parvate and A.~D. Gangal.
\newblock Calculus on fractal subsets of real line-i: Formulation.
\newblock {\em Fractals}, 17(01):53--81, 2009.

\bibitem{AD-2}
A.~Parvate and A.D. Gangal.
\newblock Calculus on fractal subsets of real line ii: Conjugacy with ordinary calculus.
\newblock {\em Fractals}, 19(03):271--290, sep 2011.

\bibitem{ASAq}
A.~Parvate, S.~Satin, and A.D. Gangal.
\newblock Calculus on fractal curves in $\mathbb{R}^{n}$.
\newblock {\em Fractals}, 19(01):15--27, mar 2011.

\bibitem{satin2013fokker}
S.~E. Satin, A.~Parvate, and A.~D. Gangal.
\newblock Fokker--planck equation on fractal curves.
\newblock {\em Chaos, Solitons \& Fractals}, 52:30--35, 2013.

\bibitem{golmankhaneh2018sub}
A.~K. Golmankhaneh and A.~S. Balankin.
\newblock Sub-and super-diffusion on cantor sets: Beyond the paradox.
\newblock {\em Phys. Lett. A.}, 382(14):960--967, 2018.

\bibitem{golmankhaneh2016non}
A.~K. Golmankhaneh and D.~Baleanu.
\newblock Non-local integrals and derivatives on fractal sets with applications.
\newblock {\em Open Physics}, 14(1):542--548, 2016.

\bibitem{banchuin2022noise}
R.~Banchuin.
\newblock Noise analysis of electrical circuits on fractal set.
\newblock {\em COMPEL-The international journal for computation and mathematics in electrical and electronic engineering}, 2022.

\bibitem{golmankhaneh2021equilibrium}
A.~K. Golmankhaneh and K.~Welch.
\newblock Equilibrium and non-equilibrium statistical mechanics with generalized fractal derivatives: A review.
\newblock {\em Mod. Phys. Lett. A}, 36(14):2140002, 2021.

\bibitem{BookAlireza}
A.~K. Golmankhaneh.
\newblock {\em Fractal Calculus and its Applications}.
\newblock World Scientific, 2022.

\bibitem{gowrisankar2021fractal}
A.~Gowrisankar, A.~K. Golmankhaneh, and C.~Serpa.
\newblock Fractal calculus on fractal interpolation functions.
\newblock {\em Fractal Fract.}, 5(4):157, 2021.

\bibitem{khalili2019random}
A.~K. Golmankhaneh and A.~Fernandez.
\newblock Random variables and stable distributions on fractal cantor sets.
\newblock {\em Fractal Fract.}, 3(2):31, 2019.

\bibitem{golmankhaneh2021fractalBro}
A.~K. Golmankhaneh and R.~T. Sibatov.
\newblock Fractal stochastic processes on thin cantor-like sets.
\newblock {\em Mathematics}, 9(6):613, 2021.

\bibitem{golmankhaneh2020stochastic}
A.~K. Golmankhaneh and C.~Tun{\c{c}}.
\newblock Stochastic differential equations on fractal sets.
\newblock {\em Stochastics}, 92(8):1244--1260, 2020.

\bibitem{soong1973random}
T.~T. Soong.
\newblock {\em Random differential equations in science and engineering}, volume 103.
\newblock Elsevier, 1973.

\bibitem{keisler2013elementary}
H.~J. Keisler.
\newblock {\em Elementary calculus: An infinitesimal approach}.
\newblock Courier Corporation, 2013.

\end{thebibliography}

\end{document}